\documentclass[12pt]{amsart}

\usepackage{amssymb}

\theoremstyle{plain}
  \newtheorem{theorem}{Theorem}[section]
  \newtheorem{proposition}[theorem]{Proposition}
  \newtheorem{lemma}[theorem]{Lemma}
  \newtheorem{corollary}[theorem]{Corollary}
\theoremstyle{definition}
  \newtheorem{definition}[theorem]{Definition}
  \newtheorem{example}[theorem]{Example}
  
  \newtheorem{remark}[theorem]{Remark}
  
\newtheorem{conjecture}[theorem]{Conjecture}
\newtheorem{problem}[theorem]{Problem}

\newcommand{\res}{\mathrm{res}}
\newcommand{\sd}{\mathrm{sd}}
\newcommand{\cone}{\mathrm{cone}}
\newcommand{\des}{\mathrm{des}}

\newcommand{\KER}{\mathrm{Ker}}
\newcommand{\Tor}{\mathrm{Tor}}



\begin{document}

\title[Barycentric subdivisions of shellable complexes]{The Lefschetz property for barycentric subdivisions of shellable complexes}

\author{Martina Kubitzke}
\address{Fachbereich Mathematik und Informatik\\
      Philipps-Universit\"at Marburg\\
      35032 Marburg, Germany and
\thanks{Department of Mathematics\\
    Cornell University\\
    Ithaca, NY 14853, USA}}
\email{kubitzke@mathematik.uni-marburg.de}

\author{Eran Nevo}
\address{Department of Mathematics\\
    Cornell University\\
    Ithaca, NY 14853, USA}
\email{eranevo@math.cornell.edu}

\thanks{Martina Kubitzke was supported by DAAD}

\keywords{Barycentric subdivision, Stanley-Reisner ring, Lefschetz,
shellable}


\begin{abstract}
We show that an 'almost strong Lefschetz' property holds for the barycentric subdivision of a shellable complex. {From} this we conclude that for the barycentric subdivision of a Cohen-Macaulay complex, the $h$-vector is unimodal, peaks in its middle degree (one of them if the dimension of the complex is even), and that its $g$-vector is an $M$-sequence. In particular, the (combinatorial) $g$-conjecture is verified for barycentric subdivisions of homology spheres.
In addition, using the above algebraic result, we derive new inequalities on a refinement of the Eulerian statistics on permutations, where permutations are grouped by the number of descents and the image of $1$.
\end{abstract}

\maketitle

\section{Introduction}

The starting point for this paper is Brenti and Welker's study of $f$-vectors of barycentric subdivisions of simplicial complexes \cite{Brenti-Welker}. They showed that for a Cohen-Macaulay complex, the $h$-vector of its barycentric subdivision is unimodal (\cite[Corollary 3.5]{Brenti-Welker}). This raises the following natural questions about this $h$-vector: Where is its peak? Is the vector of its successive differences up to the middle degree ('$g$-vector') an $M$-sequence?

We answer these questions by finding an 'almost strong Lefschetz' element in case the original complex is shellable. Let us make this precise (for unexplained terminology see Section \ref{sec:Proof}):
let $\Delta$ be a $(d-1)$-dimensional Cohen-Macaulay simplicial complex over a field $k$, on vertex set $[n]:=\{1,\ldots,n\}$ and $\Theta=\{\theta_1,...,\theta_d\}$ a maximal linear system of parameters for its face ring $k[\Delta]$. We call a degree one element in the polynomial ring $\omega\in A=k[x_1,\ldots,x_n]$ an \emph{$s$-Lefschetz element for the $A$-module $k[\Delta]/\Theta$} if multiplication $$\omega^{s-2i}:\hspace{5pt}(k[\Delta]/\Theta)_i\;\longrightarrow\;(k[\Delta]/\Theta)_{s-i}$$
is an injection for $0\leq i\leq \lfloor\frac{s-1}{2}\rfloor$. A $(\dim\Delta)$-Lefschetz element is called an \emph{almost strong Lefschetz element for $k[\Delta]/\Theta$}.
(Recall that $k[\Delta]/\Theta$ is called strong Lefschetz if it has a $(\dim\Delta +1)$-Lefschetz element.)
Let $G_s(\Delta)$ be the set consisting of all pairs $(\Theta,\omega)$ such that $\Theta$ is a maximal linear system of parameters for $k[\Delta]$ and $\omega$ is an $s$-Lefschetz element for $k[\Delta]/\Theta$. It can be shown that $G_s(\Delta)$ is a Zariski open set (e.g. imitate the proof in \cite[Proposition 3.6]{Swartz}).
If $G_s(\Delta)\neq \emptyset$ we say that $\Delta$ is \emph{$s$-Lefschetz over $k$}, and that $\Delta$ is \emph{almost strong Lefschetz over $k$} if $G_{\dim\Delta}(\Delta)\neq \emptyset$.

\begin{theorem} \label{mainresult}
Let $\Delta$ be a shellable $(d-1)$-dimensional simplicial complex and let $k$ be an infinite field. Then the barycentric subdivision of $\Delta$ is almost strong Lefschetz over $k$.
\end{theorem}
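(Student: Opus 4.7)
The plan is to induct on the number $m$ of facets in a shelling $F_1,\ldots,F_m$ of $\Delta$, setting $\Delta_j = F_1\cup\cdots\cup F_j$. For the base case $m=1$, $\Delta$ is a single $(d-1)$-simplex and $\sd(\Delta)$ is the order complex of the poset of nonempty subsets of $[d]$; this complex has a transitive $S_d$-action and a very concrete structure, so I would verify directly that it satisfies the strong Lefschetz property (and hence the almost strong Lefschetz property) over any infinite field by exhibiting $\Theta$ and $\omega$ explicitly, or by a join/cone argument that reduces the computation to one variable at a time.

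For the inductive step, write $\Delta = \Delta' \cup F$ with $F = F_m$ the last facet and $R \subseteq F$ its restriction. By the shelling property, $\Delta' \cap F$ consists of the faces of $F$ not containing $R$, which is itself a shellable subcomplex of $\partial F$, and $\sd(\Delta') \cap \sd(F) = \sd(\Delta' \cap F)$. So all three complexes $\sd(\Delta')$, $\sd(F)$, $\sd(\Delta' \cap F)$ are barycentric subdivisions of shellable complexes, each smaller than $\Delta$ in the inductive sense. The central algebraic tool is the Mayer-Vietoris short exact sequence
\[
0 \to k[\sd(\Delta)] \to k[\sd(\Delta')]\oplus k[\sd(F)] \to k[\sd(\Delta'\cap F)] \to 0.
\]
Reducing modulo a carefully chosen common linear system of parameters $\Theta$ and a candidate element $\omega$ chosen generically (which is possible since $k$ is infinite and the Zariski open sets $G_s$ of the three complexes are nonempty by induction, so their intersection is nonempty), one obtains a compatible diagram in which multiplication by $\omega^{d-1-2i}$ is injective on the middle term. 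A snake-lemma diagram chase then transfers this injectivity to $k[\sd(\Delta)]/\Theta$, which is exactly the almost strong Lefschetz conclusion we want.

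The main obstacle is the dimension mismatch: $\dim(\Delta' \cap F)$ is typically strictly less than $d-1$, so the inductive hypothesis for $\sd(\Delta' \cap F)$ yields only $s$-Lefschetz with $s = \dim(\Delta' \cap F)$, weaker than the $(d-1)$-Lefschetz property needed to make the snake lemma run cleanly in all the required degrees. One must therefore track the top graded pieces of $k[\sd(\Delta' \cap F)]/\Theta$ (which vanish above the socle degree) and combine this vanishing with the inductive control on the other two summands to recover injectivity on $k[\sd(\Delta)]/\Theta$ in the full range $0 \leq i \leq \lfloor (d-2)/2\rfloor$. Selecting $\Theta$ and $\omega$ so that the reduced Mayer-Vietoris sequence remains short exact (i.e.\ the relevant $\Tor$ vanishing holds) and so that $\omega$ simultaneously witnesses the needed Lefschetz properties on all three quotients is expected to be the technical heart of the argument.
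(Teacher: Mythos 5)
Your skeleton matches the paper's (Mayer--Vietoris over the last shelling facet, a common generic $\Theta$ and $\omega$, reduction mod $\Theta$, diagram chase), but the two load-bearing steps are asserted rather than proved. First, the base case: you cannot simply ``verify directly'' that $\sd$ of a $(d-1)$-simplex $F$ is strong Lefschetz over an arbitrary infinite field. A cone argument only reduces the question to $\sd(\partial F)$, which is a genuine simplicial $(d-2)$-sphere, and the Lefschetz property for such a sphere is not a routine computation with explicit $\Theta,\omega$; in characteristic $0$ it is an instance of Stanley's $g$-theorem, and since you claim the result over every infinite field you cannot even fall back on that. The paper closes this by observing that $\sd(\partial F)$ is obtained from $\partial F$ by a sequence of stellar subdivisions, hence is strongly edge decomposable, and invoking Murai's theorem that such complexes are strong Lefschetz in any characteristic. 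Without some input of this kind your base case is unsupported.

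Second, your induction scheme and the deferred ``technical heart'' do not yet work. Induction on the number of facets $m$ alone never reaches $\sigma:=\Delta'\cap F$: for $m=2$ this $(d-2)$-dimensional complex can have up to $d$ facets, more than $\Delta$ itself, so you need a double induction on dimension and number of facets -- and the dimension induction is exactly what rescues the step you leave open. The $\Tor$-vanishing you describe as ``expected to be the technical heart'' is where the paper does the real work: choose $\Theta$ so that $\theta_1,\ldots,\theta_{d-1}$ is an l.s.o.p.\ for $k[\sd(\sigma)]$; then $\Tor_1(k[\sd(\sigma)],A/\Theta)_i$ is identified with the kernel of multiplication $\theta_d\colon (k[\sd(\sigma)]/(\theta_1,\ldots,\theta_{d-1}))_{i-1}\to (k[\sd(\sigma)]/(\theta_1,\ldots,\theta_{d-1}))_{i}$, and the inductive almost strong Lefschetz property of the $(d-2)$-dimensional shellable complex $\sigma$ gives injectivity of $\theta_d$ precisely for $i-1\le\lfloor\frac{d-3}{2}\rfloor$, i.e.\ $\Tor_1(k[\sd(\sigma)],A/\Theta)_i=0$ for $0\le i\le\lfloor\frac{d-2}{2}\rfloor$, which is exactly the range the diagram chase needs (full short exactness after reduction is not required). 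In particular your worry that the lower-dimensional inductive hypothesis is ``too weak,'' and the proposed remedy via socle-degree tracking, are beside the point; what is actually missing is this identification of $\Tor_1$ with a kernel of multiplication by $\theta_d$ and the dimension induction that makes it vanish. Until the base case and this vanishing argument are supplied, the proposal is an outline rather than a proof.
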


This theorem has some immediate $f$-vector consequences; in particular it verifies the $g$-conjecture for barycentric subdivisions of homology spheres, and beyond.
 One of the main problems in algebraic combinatorics is the $g$-conjecture, first raised as question by McMullen for simplicial spheres \cite{McMullen-g-conj}. Here we state the part of the conjecture which is still open.
\begin{conjecture}($g$-conjecture)\label{conj-g}
Let $L$ be a simplicial sphere, then its $g$-vector is an $M$-sequence.
\end{conjecture}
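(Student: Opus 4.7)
Plan: the $g$-conjecture is equivalent to the existence, for every simplicial $(d-1)$-sphere $L$, of a generic linear system of parameters $\Theta$ for $k[L]$ and a linear form $\omega\in A_1$ such that multiplication by $\omega^{d-2i}$ induces an isomorphism $(k[L]/\Theta)_i \longrightarrow (k[L]/\Theta)_{d-i}$ for every $0\le i\le d/2$ (the strong Lefschetz property). Under this hypothesis the Hilbert function of the standard graded Artinian algebra $k[L]/(\Theta,\omega)$ coincides with the $g$-vector through degree $\lfloor d/2\rfloor$, and Macaulay's theorem then forces the $M$-sequence conclusion. So the task reduces to producing a strong Lefschetz pair $(\Theta,\omega)$ for an arbitrary simplicial sphere.

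First I would try to leverage Theorem \ref{mainresult}. That result provides almost strong Lefschetz for $\sd(\Delta)$ whenever $\Delta$ is shellable; the initial refinements I would aim for are to upgrade \emph{almost} strong to strong, and to enlarge the base class from shellable to Cohen-Macaulay by an inductive link-by-link argument exploiting that shellability propagates well to faces and joins. Second, to reach arbitrary simplicial spheres from the barycentric subdivisions of shellable ones, I would pursue a two-pronged strategy. In the PL case, I would attempt an inductive argument on Pachner's theorem: the set of strong Lefschetz pairs is Zariski open (a direct analogue of the openness recorded for $G_s$ in the excerpt) and trivially nonempty for the boundary of a simplex, so it would suffice to prove stability under bistellar flips, perhaps by constructing an explicit correspondence of Lefschetz pairs across a flip. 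In the non-PL case, I would instead iterate barycentric subdivision until reaching a shellable complex and then attempt to transfer the $M$-sequence conclusion backwards through the explicit Brenti-Welker $h$-vector formula.

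The main obstacle, and the reason Conjecture \ref{conj-g} has remained open, is precisely the bistellar-stability and transfer-back steps. Openness of the strong Lefschetz locus guarantees only stability under small deformations of $(\Theta,\omega)$ for a \emph{fixed} complex; a bistellar flip deforms the complex itself, and Lefschetz elements can collapse through such a flip in ways that require a genuinely new ingredient to repair. Analogously, the Brenti-Welker substitution is an Eulerian-type convolution that does not invert cleanly under the $M$-sequence condition, so even granting the $g$-conjecture for $\sd^N(L)$ one does not immediately recover it for $L$. A serious attack along these lines would therefore need either a new algebraic mechanism for producing Lefschetz elements on non-polytopal spheres (for instance, a lifting of $k[L]$ to a commutative ring carrying a geometric hard Lefschetz operator, in analogy with Stanley's original argument for simplicial polytopes via toric varieties) or a structural theorem asserting that every simplicial sphere admits an iterated subdivision that is both shellable and whose Lefschetz data can be pulled back to the original complex.
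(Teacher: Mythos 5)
The item you have been asked to prove is labeled \emph{Conjecture}~\ref{conj-g} in the paper precisely because the authors do not prove it: the text immediately before it reads ``Here we state the part of the conjecture which is still open,'' and the only progress the paper makes toward it is the special case Corollary~\ref{cor:CM,M-seq} (barycentric subdivisions of spheres, homology spheres, and doubly Cohen--Macaulay complexes). So there is no ``paper's own proof'' to compare against; any purported proof of the general statement would be a major new result, not a reconstruction of an argument in this paper.

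To your credit, your proposal does not actually claim to close the conjecture --- it is an honest research sketch that correctly identifies the standard reduction (strong Lefschetz for $k[L]/\Theta$ implies the $g$-vector is the Hilbert function of an Artinian algebra, hence an $M$-sequence by Macaulay) and correctly names the obstructions: Zariski openness of the Lefschetz locus is a statement about perturbing $(\Theta,\omega)$ for a \emph{fixed} complex and gives no control across a bistellar flip, and the Brenti--Welker transformation is not invertible in a way that transports the $M$-sequence property from $\sd^N(L)$ back to $L$. These are exactly the points at which the program breaks down, and the paper offers no mechanism to cross them. One further caution: the preliminary step you suggest, ``upgrade almost strong Lefschetz to strong and enlarge shellable to Cohen--Macaulay by a link-by-link induction,'' is not something the paper does or can do. Theorem~\ref{mainresult} delivers injectivity only of $\omega^{d-2i-1}\colon (\,\cdot\,)_i \to (\,\cdot\,)_{d-1-i}$ (one degree short of the full range), and the proof is inherently a shelling induction via the Mayer--Vietoris sequence~(\ref{eq:MV-seq}); for merely Cohen--Macaulay input the paper passes through Theorem~\ref{ShellableCM} to replace $\Delta$ by a shellable complex with the same $h$-vector, which is only legitimate because $h^{\sd(\Delta)}$ depends on $\Delta$ only through $h^{\Delta}$ --- an observation that sidesteps, rather than achieves, a Lefschetz statement for Cohen--Macaulay complexes. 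In short: the statement is an open conjecture in this paper, your sketch is a reasonable description of why it is hard, and neither you nor the paper supply a proof of it.
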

It is conjectured to hold in greater generality, for all homology spheres and even for all doubly Cohen-Macaulay complexes, as was suggested by Bj\"{o}rner and Swartz \cite{Swartz}.
We verify these conjectures in a special case, as it was already conjectured in \cite{BW-conjectures}:
\begin{corollary}\label{cor:CM,M-seq}
Let $\Delta$ be a Cohen-Macaulay simplicial complex (over some field). Then
the $g$-vector of the barycentric subdivision of $\Delta$ is an $M$-sequence.
In particular, the $g$-conjecture holds for barycentric subdivisions of simplicial spheres, of homology spheres, and of doubly Cohen-Macaulay complexes.
\end{corollary}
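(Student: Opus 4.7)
The plan is to reduce to the shellable case handled by Theorem \ref{mainresult} via an $f$-vector reduction. Two classical inputs make this work. First, by Stanley, the $h$-vector of a Cohen--Macaulay simplicial complex is an $M$-sequence, and conversely every $M$-sequence of length $d+1$ occurs as the $h$-vector of some pure shellable complex of dimension $d-1$. Second, by Brenti and Welker, the $f$-vector of $\sd(\Delta)$ depends only on the $f$-vector of $\Delta$.

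Concretely, let $\Delta$ be a $(d-1)$-dimensional Cohen--Macaulay complex; extending scalars I may assume $k$ is infinite. The first input produces a pure shellable $\Delta'$ of dimension $d-1$ with $f(\Delta')=f(\Delta)$, and the second gives $h(\sd(\Delta'))=h(\sd(\Delta))$. Theorem \ref{mainresult} applied to $\Delta'$ yields $\Theta$ and an almost strong Lefschetz element $\omega$, so multiplication by $\omega^{d-1-2i}$ embeds $(k[\sd(\Delta')]/\Theta)_i$ into $(k[\sd(\Delta')]/\Theta)_{d-1-i}$ for $0\leq i\leq\lfloor (d-2)/2\rfloor$. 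Factoring $\omega^{d-1-2i}=\omega^{d-2-2i}\cdot\omega$ forces the single multiplication $\omega:(k[\sd(\Delta')]/\Theta)_i\to(k[\sd(\Delta')]/\Theta)_{i+1}$ to be injective for $0\leq i\leq\lfloor d/2\rfloor-1$.

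The standard graded $k$-algebra $B:=(k[\sd(\Delta')]/\Theta)/(\omega)$ therefore has Hilbert function $g_i(\sd(\Delta'))$ in degrees $0\leq i\leq\lfloor d/2\rfloor$, and Macaulay's theorem applied to $B$ makes this initial segment an $M$-sequence. Hence the $g$-vector of $\sd(\Delta')$, which by Brenti--Welker coincides with that of $\sd(\Delta)$, is an $M$-sequence. The final assertion of the corollary is then immediate, since simplicial spheres, homology spheres, and doubly Cohen--Macaulay complexes are all Cohen--Macaulay. I do not expect a genuine obstacle: both the Cohen--Macaulay-to-shellable reduction and the standard passage from almost strong Lefschetz to an $M$-sequence $g$-vector are classical, and all the substantive work has been absorbed into Theorem \ref{mainresult}.
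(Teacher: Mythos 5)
Your proposal is correct and follows essentially the same route as the paper: reduce to the shellable case using Stanley's characterization of Cohen--Macaulay $h$-vectors (Theorem \ref{ShellableCM}) together with the Brenti--Welker formula, then apply Theorem \ref{mainresult} and read off $g_i^{\sd(\Delta)}$ as the Hilbert function of $k[\sd(\Delta')]/(\Theta,\omega)$ up to degree $\lfloor d/2\rfloor$. The only cosmetic difference is your remark about extending scalars, which is not needed since the shellable replacement is Cohen--Macaulay over every (infinite) field.
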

Note that the non-negativity of the $g$-vector of barycentric subdivisions of homology spheres already follows from Karu's result on the non-negativity of the cd-index for order complexes of Gorenstein$^*$ posets \cite{Karu-cd-index}.
In Section \ref{sec:Proof} we provide some preliminaries and prove our main result Theorem \ref{mainresult}.
In Section \ref{CombCons} we derive some $f$-vector corollaries from Theorem \ref{mainresult}, as well as extending this theorem to shellable polytopal complexes.
In Section \ref{permutations}
we prove new inequalities for the refined Eulerian statistics on permutations, introduced by Brenti and Welker. The proofs are based on Theorem \ref{mainresult}. As a corollary, the location of the peak of the $h$-vector of the barycentric subdivision of a Cohen-Macaulay complex is determined.

\section{Proof of Theorem \ref{mainresult}}\label{sec:Proof}
Let $\Delta$ be a finite (abstract, non-empty) simplicial complex on a vertex set $\Delta_0=[n]=\{1,2,...,n\}$, i.e. $\Delta \subseteq 2^{[n]}$ and if
$S\subseteq T\in \Delta$ then $S\in \Delta$ (and $\emptyset \in \Delta$), and let $\Delta$ be of dimension $d-1$, i.e. $\max_{S\in \Delta}\#S=d$. The $f$-vector of $\Delta$ is $f^{\Delta}=(f_{-1},f_0,...,f_{d-1})$ where $f_{i-1}=\#\{S\in \Delta: \#S=i\}$. its $h$-vector, which carries the same combinatorial information, is $h^{\Delta}=(h_0,...,h_d)$, where $\sum_{0\leq i\leq d}h_i x^{d-i}= \sum_{0\leq i\leq d}f_{i-1}(x-1)^{d-i}$.

Let $k$ be an infinite field and $A=k[x_1,...,x_n]=A_0\oplus A_1\oplus...$ the polynomial ring graded by degree. The face ring (Stanley-Reisner ring) of $\Delta$ over $k$ is $k[\Delta]=A/I_{\Delta}$ where $I_{\Delta}$ is the ideal $I_{\Delta}=(\prod_{1\leq i \leq n}x_i^{a_i}: \{i:a_i>0\}\notin \Delta)$. It inherits the grading from $A$.
If $k$ is infinite, a maximal homogeneous system of parameters for $k[\Delta]$ can be chosen from the 'linear' part $k[\Delta]_1$, called l.s.o.p. for short. If $k[\Delta]$ is Cohen-Macaulay (CM for short) then any maximal l.s.o.p has $d$ elements. We will denote such l.s.o.p. by $\Theta=\{\theta_1,...,\theta_d\}$. In this case $k[\Delta]$ is a free $k[\Theta]$-module, and $h_i^{\Delta}=\dim_k(k[\Delta]/\Theta)_i$. We say that $\Delta$ is CM (over $k$) if $k[\Delta]$ is a CM ring.

The \emph{barycentric subdivision} of a simplicial complex $\Delta$ is the simplicial complex $\sd(\Delta)$ on vertex set $\Delta\setminus\{\emptyset\}$ whose simplices are all the chains $F_0\subsetneq F_1\subsetneq\ldots\subsetneq F_r$ of elements $F_i\in\Delta\setminus\{\emptyset\}$ for $0\leq i\leq r$. The geometric realizations of $\Delta$ and $\sd(\Delta)$ are homeomorphic. Recall that Cohen-Macaulayness is a topological property. Hence, if $\Delta$ is CM then $\sd(\Delta)$ is CM as well.
In this case Baclawski and Garsia (\cite[Proposition 3.4.]{Baclawski-Garsia}) showed that $\Theta=\{\theta_1,\ldots,\theta_{\dim\Delta+1}\}$ where $\theta_i:=\sum_{F\in\Delta,\#F=i}x_{\{F\}}$ for $1\leq i\leq\dim\Delta+1$, is a l.s.o.p. for $k[\sd(\Delta)]$.
For further terminology and background we refer to Stanley's book \cite{StanleyGreenBook}.
Let us start with some auxiliary results.

We denote by $\cone(\Delta)$ the cone over $\Delta$, i.e. $\cone(\Delta)$ is the join of some
vertex $\{v\}$ with $\Delta$ where $v\notin \Delta$, $\cone(\Delta):=\{~F~|~F\in \Delta\}\cup\{~F\cup\{v\}~|~F\in \Delta\}$.
The following lemma deals with the effect of coning on the $s$-Lefschetz property.

\begin{lemma}\label{SL:cone}
Let $\Delta$ be a $(d-1)$-dimensional simplicial complex. If $\Delta$ is $s$-Lefschetz over $k$ then the same is true for $\cone(\Delta)$.
\end{lemma}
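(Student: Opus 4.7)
The plan is to reduce the Lefschetz question for $\cone(\Delta)$ to the one already assumed for $\Delta$, by exploiting the fact that on the level of face rings, coning amounts to adjoining a polynomial variable. Let $v$ denote the cone vertex and $A' = k[x_1,\ldots,x_n,x_v]$. Because every minimal non-face of $\cone(\Delta)$ is a minimal non-face of $\Delta$ (the vertex $v$ lies in every maximal face that contains it), the Stanley-Reisner ideal satisfies $I_{\cone(\Delta)} = I_\Delta \cdot A'$, yielding a graded isomorphism
\[
k[\cone(\Delta)] \;\cong\; k[\Delta] \otimes_k k[x_v].
\]
Since $\Delta$ is CM (implicit in its being $s$-Lefschetz), so is $\cone(\Delta)$.

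Given a pair $(\Theta,\omega) \in G_s(\Delta)$ with $\Theta=\{\theta_1,\ldots,\theta_d\}$, I would propose the pair $(\Theta',\omega) \in k[\cone(\Delta)]_1^{d+1} \times k[\cone(\Delta)]_1$ where $\Theta' := \{\theta_1,\ldots,\theta_d,x_v\}$, with $\theta_1,\ldots,\theta_d$ and $\omega$ viewed in $k[\cone(\Delta)]$ via the natural inclusion of variables. Killing $x_v$ first and then $\theta_1,\ldots,\theta_d$ produces a graded $k$-algebra isomorphism
\[
k[\cone(\Delta)]/\Theta' \;\stackrel{\sim}{\longrightarrow}\; k[\Delta]/\Theta
\]
sending the class of $\omega$ to the class of $\omega$. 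Since the right-hand side is a finite-dimensional $k$-vector space and $|\Theta'| = d+1 = \dim\cone(\Delta)+1$, it follows that $\Theta'$ is a maximal l.s.o.p.\ for $k[\cone(\Delta)]$.

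Under the above isomorphism the multiplication maps $\omega^{s-2i}:(k[\cone(\Delta)]/\Theta')_i \to (k[\cone(\Delta)]/\Theta')_{s-i}$ coincide degree-by-degree with their counterparts on $k[\Delta]/\Theta$. Consequently, the injectivity hypothesis for $0 \leq i \leq \lfloor (s-1)/2 \rfloor$ transfers directly, and we conclude $(\Theta',\omega) \in G_s(\cone(\Delta))$, so $\cone(\Delta)$ is $s$-Lefschetz over $k$.

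I do not expect any substantive obstacle here: once the ring identification $k[\cone(\Delta)] = k[\Delta][x_v]$ is recognized, the natural choice of $x_v$ as the extra parameter makes the argument formal. The one point requiring verification is that $\Theta'$ really is a maximal l.s.o.p., but that is immediate from the finite-dimensionality of $k[\Delta]/\Theta$ combined with the Krull dimension count. Note that this lemma does \emph{not} upgrade almost-strong Lefschetzness, since coning raises the dimension by one while $s$ stays fixed; it merely preserves a given level $s$.
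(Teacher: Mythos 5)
Your proof is correct and is essentially the paper's own argument: both use the identification $k[\cone(\Delta)]\cong k[\Delta]\otimes_k k[x_v]$, take $\widetilde{\Theta}=\Theta\cup\{x_v\}$ as the l.s.o.p.\ for the cone, and observe that $k[\cone(\Delta)]/\widetilde{\Theta}\cong k[\Delta]/\Theta$ compatibly with multiplication by $\omega$, so $(\Theta,\omega)\in G_s(\Delta)$ gives $(\widetilde{\Theta},\omega)\in G_s(\cone(\Delta))$. Your closing remark that the level $s$ is preserved (not upgraded) also matches the paper's note following the lemma.
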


\begin{proof}
Let $\Theta$ be a l.s.o.p. for $k[\Delta]$ and let $v$ be the apex of $\cone(\Delta)$.
Then $\widetilde{\Theta}:=\Theta\cup \{x_v\}$ is a l.s.o.p. for $k[\cone(\Delta)]$ (to see this, use the isomorphism $k[\cone(\Delta)]\cong k[\Delta]\otimes_k k[x_v]$ of modules over $k[x_u:u\in \Delta_0\cup \{v\}]\cong k[x_u:u\in \Delta_0]\otimes_k k[x_v]$).
Furthermore, $k[\Delta]/\Theta\cong k[\cone(\Delta)]/\widetilde{\Theta}$ as $A$-modules, where $A=k[x_i: i\in \{v\}\cup \Delta_0]$ and $x_v\cdot k[\Delta]=0$.
Hence, for any pair $(\Theta,\omega)\in G_s(\Delta)$ we have $(\widetilde{\Theta},\omega)\in G_s(\cone(\Delta))$, and the assertion follows.
\end{proof}

Note that if $\Delta$ is almost strong Lefschetz over $k$ then $\cone(\Delta)$ is $(\dim\Delta)$-Lefschetz over $k$.

The following theorem is the main part of Stanley's proof of the necessity part of the $g$-theorem for simplicial polytopes \cite{St}.

\begin{theorem}[\cite{St}]\label{thm:HLforPolytopes}
Let $P$ be a simplicial $d$-polytope and let $\Delta$ be the boundary complex of $P$. Then $\Delta$ is $d$-Lefschetz over $\mathbb{R}$.
\end{theorem}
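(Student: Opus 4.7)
The plan is to follow Stanley's original strategy, realizing $R:=\mathbb{R}[\Delta]/\Theta$ as the even cohomology of a projective toric variety and invoking the classical hard Lefschetz theorem there. Two preliminary reductions organize the argument.

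First I would reduce to the case of a rational polytope. Being $d$-Lefschetz is a Zariski-open condition on the pair $(\Theta,\omega)$, and by continuity it is also stable under small perturbations of the vertex coordinates of $P$ itself (since such perturbations change neither the face lattice of $\Delta$ nor the dimensions $h_i=\dim_{\mathbb{R}} R_i$). For a given simplicial $d$-polytope one can move every vertex an arbitrarily small amount to make it lie in $\mathbb{Q}^d$, and it therefore suffices to establish the theorem for a \emph{rational} simplicial $d$-polytope $P$, using a specific l.s.o.p.\ $\Theta$ and a specific degree-one element $\omega_P$ built from the vertex coordinates and support numbers of $P$.

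Next I would pass to toric geometry. For rational $P$ with $0$ in its interior, let $\Sigma$ be the normal fan of $P$; the associated projective toric variety $X=X_\Sigma$ has complex dimension $d$. Because $P$ is simplicial, $\Sigma$ is simplicial and $X$ is a projective $\mathbb{Q}$-homology manifold (an orbifold). The Danilov-Jurkiewicz theorem produces a graded ring isomorphism
\[
H^{2\bullet}(X;\mathbb{R})\;\cong\;R=\mathbb{R}[\Delta]/\Theta,
\]
where $\Theta=\{\theta_1,\ldots,\theta_d\}$ has $j$-th element $\theta_j=\sum_{v\in\Delta_0}v_j\,x_v$, and moreover $H^{\mathrm{odd}}(X;\mathbb{R})=0$. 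The ample class determining the projective embedding coming from $P$ corresponds under this isomorphism to an explicit element $\omega_P=\sum_{v\in\Delta_0}a_v x_v\in R_1$.

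Finally I would apply hard Lefschetz on $X$ and translate. The classical hard Lefschetz theorem for a projective $\mathbb{Q}$-homology manifold of complex dimension $d$ asserts that $\omega_P^{d-2i}:H^{2i}(X;\mathbb{R})\to H^{2(d-i)}(X;\mathbb{R})$ is an isomorphism for $0\le i\le d/2$. Transported through the Danilov-Jurkiewicz isomorphism this says $\omega_P^{d-2i}:R_i\to R_{d-i}$ is an isomorphism, and in particular is injective in the range $0\le i\le\lfloor(d-1)/2\rfloor$ required by the definition of $s$-Lefschetz with $s=d$. Hence $(\Theta,\omega_P)\in G_d(\Delta)$, which combined with the first reduction yields the theorem. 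The main obstacle is precisely the hard Lefschetz step on a generally singular toric variety $X$: one cannot cite the smooth Kähler case directly, and must either work on the orbifold $X$ (using rational Poincaré duality and the Hodge-Riemann bilinear relations on $H^{2\bullet}(X;\mathbb{R})$) or invoke the Beilinson-Bernstein-Deligne decomposition applied to a toric resolution of singularities; a purely combinatorial substitute is McMullen's later proof via the polytope algebra.
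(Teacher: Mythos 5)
The paper does not prove Theorem~\ref{thm:HLforPolytopes}: it is stated as a citation to Stanley \cite{St}, and the surrounding text only notes that it is the heart of his proof of the necessity part of the $g$-theorem. Your sketch is a correct reconstruction of Stanley's argument along the standard route: perturb $P$ to a rational simplicial $d$-polytope with the same face lattice $\Delta$; form the projective simplicial toric variety $X$ from the normal fan; identify $\mathbb{R}[\Delta]/\Theta$ with $H^{2\bullet}(X;\mathbb{R})$ via Danilov--Jurkiewicz, with $\Theta$ given by the coordinate linear forms; and apply hard Lefschetz with the ample class $\omega_P$ to obtain the isomorphisms $R_i\to R_{d-i}$, hence the injections required for $d$-Lefschetz. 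You also correctly isolate the one genuinely delicate point, namely hard Lefschetz on the (typically singular, orbifold) variety $X$, and you list the accepted ways to justify it --- rational Poincar\'e duality together with the Hodge--Riemann bilinear relations for $\mathbb{Q}$-homology manifolds, the BBD decomposition theorem applied to a toric resolution, or McMullen's purely combinatorial proof via the polytope algebra. A minor remark on the opening reduction: you do not actually need Zariski-openness of $G_d(\Delta)$ to pass to the rational case. What is needed is only that a sufficiently small perturbation of the vertices preserves the face lattice, so $\Delta$, $k[\Delta]$, and $h^{\Delta}$ are literally unchanged; the toric construction then hands you an explicit pair $(\Theta,\omega_P)\in G_d(\Delta)$, which is all that the definition requires.
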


If $\Delta$ is a simplicial complex and $\{F_1,\ldots, F_m\}\subseteq\Delta$ is a collection of faces of $\Delta$ we denote by $\langle F_1,\ldots,F_{m}\rangle$ the simplicial complex whose faces are the subsets of the $F_i$'s, $1\leq i\leq m$.

For an arbitrary infinite field $k$ (of arbitrary characteristic!) the conclusion in Theorem \ref{thm:HLforPolytopes} holds for the following polytopes, which will suffice for concluding our main result Theorem \ref{mainresult}:

\begin{proposition}\label{prop:LefSdSimplex}
Let $P$ be a $d$-simplex and let $\Delta$ be its barycentric subdivision. Let $k$ be an infinite field. Then $\Delta$ is almost strong Lefschetz over $k$.
\end{proposition}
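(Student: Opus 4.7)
The plan is to split $\Delta=\sd(P)$ as a cone over $\sd(\partial P)$, realize the latter as the boundary complex of a simplicial $d$-polytope, apply Theorem~\ref{thm:HLforPolytopes}, and then pass from $\mathbb{R}$ to an arbitrary infinite field $k$ via a Zariski-density argument.

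First I would observe that $\sd(P)=\cone_v(\sd(\partial P))$, where $v$ is the vertex of $\sd(P)$ corresponding to the unique top-dimensional face of $P$: every chain $F_0\subsetneq\cdots\subsetneq F_r$ of non-empty faces of $P$ either misses the top face (and so is a face of $\sd(\partial P)$) or contains it as its top element (and so lies in $\cone_v(\sd(\partial P))$). Next I would realize $\sd(\partial P)$ as the boundary complex of a simplicial $d$-polytope. A convenient choice is the polar dual of the permutohedron $\Pi_{d+1}$: its facets are indexed by proper non-empty subsets of $[d+1]$, two facets are adjacent iff the corresponding subsets are comparable under inclusion, and higher-codimension faces are indexed by chains of such subsets---precisely the face structure of $\sd(\partial P)$; since $\Pi_{d+1}$ is a simple polytope, its dual is simplicial. (Alternatively, $\sd(\partial P)$ is obtained from $\partial P$ by iterated stellar subdivisions at face barycenters in order of decreasing dimension, each preserving simplicial polytopality.) Theorem~\ref{thm:HLforPolytopes} then gives that $\sd(\partial P)$ is $d$-Lefschetz over $\mathbb{R}$, and Lemma~\ref{SL:cone} lifts this to $\sd(P)$, which, being of dimension $d$, is thereby almost strong Lefschetz over $\mathbb{R}$.

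To pass to an arbitrary infinite field $k$, I would use that $G_d(\sd(P))$ is cut out inside the affine space of candidate pairs $(\Theta,\omega)$ by the simultaneous non-vanishing of a fixed finite list of polynomials with \emph{integer} coefficients: pivot minors of the multiplication-by-$\omega^{d-2i}$ matrices witnessing injectivity, together with the open conditions defining an l.s.o.p.\ $\Theta$. Non-emptiness of $G_d(\sd(P))$ over $\mathbb{R}$ forces these polynomials to be non-zero elements of $\mathbb{Z}[\mathrm{coeffs}]$, whence they admit a common non-vanishing point over any infinite $k$. I expect the main obstacle to be pinning down the polytopal realization of $\sd(\partial P)$: both the dual-permutohedron description and the stellar-subdivision description are folklore, but making either rigorous requires honest combinatorial work, whereas the cone decomposition and the field-change step are essentially routine given the earlier lemmas.
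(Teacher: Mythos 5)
Your decomposition $\sd(P)=\cone(\sd(\partial P))$ and the polytopal realization of $\sd(\partial P)$ (iterated stellar subdivisions of $\partial P$, equivalently the boundary of the dual permutohedron) are fine, and together with Theorem \ref{thm:HLforPolytopes} and Lemma \ref{SL:cone} they do prove the proposition \emph{over $\mathbb{R}$}. The genuine gap is the last step, the transfer to an arbitrary infinite field $k$. Non-emptiness of $G_d(\sd(P))$ over $\mathbb{R}$ tells you that the relevant maximal minors are non-zero as elements of $\mathbb{Z}[\mathrm{coeffs}]$, but a non-zero integer polynomial can become identically zero after reduction modulo $p$ (already $px$ does), and, more substantively, the generic ranks of the multiplication maps $\omega^{d-2i}\colon (k[\Delta]/\Theta)_i\to(k[\Delta]/\Theta)_{d-i}$ genuinely depend on the characteristic: Lefschetz properties are known to hold in characteristic $0$ yet fail in small positive characteristics. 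So your argument only yields non-emptiness of $G_d$ in characteristic $0$ and in all sufficiently large characteristics, not over every infinite field, which is what the statement (and its use in Theorem \ref{mainresult} and Corollary \ref{cor:CM,M-seq}, where the Cohen--Macaulay hypothesis is over an arbitrary field) requires.

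This is exactly the point the paper is careful about: its proof avoids Theorem \ref{thm:HLforPolytopes} altogether. It observes that $\sd(\partial P)$ is obtained from $\partial P$ by stellar subdivisions, hence is strongly edge decomposable, and invokes Murai's characteristic-free theorem that strongly edge decomposable complexes are strong Lefschetz over any infinite field; coning via Lemma \ref{SL:cone} then finishes as in your write-up. Note the contrast with Theorem \ref{cor:ShellablePolytopalComplex}, where the classical $g$-theorem really is used and, accordingly, the conclusion is only asserted over $\mathbb{R}$. To repair your proof you would either have to prove a characteristic-free Lefschetz statement for $\sd(\partial P)$ directly (which is what the strongly-edge-decomposable route does) or restrict the proposition, and hence the main theorem, to characteristic $0$.
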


\begin{proof}
Note that the boundary complex $\partial \Delta$ is obtained from $\partial P$ by a sequence of stellar subdivisions - order the faces of $\partial P$ by decreasing dimension and perform a stellar subdivision at each of them according to this order to obtain $\partial \Delta$. In particular, $\partial \Delta$ is \emph{strongly edge decomposable}, introduced in \cite{Nevo-VK}, as the inverse stellar moves when going backwards in this sequence of complexes demonstrate.

It was shown by Murai \cite[Corollary 3.5]{Murai-EdgeDecomposable} that
strongly edge decomposable complexes have the strong Lefschetz property (see also \cite[Corollary 4.6.6]{Nevo-PhD}).
As $\Delta=\cone(\partial \Delta)$, we conclude that $\Delta$ is
$d$-Lefschetz over $k$ by Lemma \ref{SL:cone}.
\end{proof}

We would like to point out that the proof of Proposition \ref{prop:LefSdSimplex} is self-contained and does not require Theorem \ref{thm:HLforPolytopes}.
Shellability of simplicial complexes is a useful tool in combinatorics; here we give two equivalent definitions for shellability which we will use later.
\begin{definition}
A pure simplicial complex $\Delta$ is called \emph{shellable} if $\Delta$ is a simplex or if one of the following equivalent conditions is satisfied. There exists a linear ordering $F_1,\ldots,F_m$ of the facets of $\Delta$ such that
\begin{itemize}
\item [(a)] $\langle F_i\rangle\cap\langle F_1,\ldots,F_{i-1}\rangle$ is generated by a non-empty set of maximal proper faces of $\langle F_i\rangle$, for all $2\leq i\leq m$.
\item [(b)] the set $\{F~|~F\in\langle F_1,\ldots, F_i\rangle, F\notin\langle F_1,\ldots,F_{i-1}\rangle\}$ has a unique minimal element for all $2\leq i\leq m$. This element is called the \emph{restriction face} of $F_i$. We denote it by $\res(F_i)$.
\end{itemize}
A linear order of the facets satisfying the equivalent conditions $(a)$ and $(b)$ is called a \emph{shelling} of $\Delta$.
\end{definition}

We are now in position to prove Theorem \ref{mainresult}. In the sequel, we will loosely use the term 'generic elements' to mean that these elements are chosen from a Zariski non-empty open set, to be understood from the context.

\begin{proof} [Proof of Theorem \ref{mainresult}]
The proof is by double induction, on the number of facets $f_{\dim\Delta}^{\Delta}$ of $\Delta$ and on the dimension of $\Delta$.
Let $\dim\Delta\geq 0$ be arbitrary and $f_{\dim\Delta}^{\Delta}=1$, i.e. $\Delta$ is a $(d-1)$-simplex, and by Proposition \ref{prop:LefSdSimplex} we are done.
Let $\dim\Delta=0$, i.e. $\Delta$ as well as $\sd(\Delta)$ consist of vertices only. Since $h_0^{\sd(\Delta)}=h_{1-1-0}^{\sd(\Delta)}$ there is nothing to show.
This provides the base of the induction.

For the induction step let $\dim\Delta\geq 1$. Let $n=f_0^{\sd(\Delta)}$ and let $A=k[x_1,\ldots,x_n]$ be the polynomial ring in $n$ variables.
Let $F_1,\ldots,F_m$ be a shelling of $\Delta$ with $m\geq 2$ and let $\widetilde{\Delta}:=\langle F_1,\ldots,F_{m-1}\rangle$. Then $\sigma:=\widetilde{\Delta}\cap \langle F_m \rangle$ is a pure $(d-2)$-dimensional subcomplex of $\partial F_m$. The barycentric subdivision $\sd(\Delta)$ of $\Delta$ is given by $\sd(\Delta)=\sd(\widetilde{\Delta})\cup\sd(\langle F_m\rangle)$ and $\sd(\sigma)=\sd(\widetilde{\Delta})\cap\sd(\langle F_m\rangle)$.

We get the following Mayer-Vietoris exact sequence of $A$-modules:
\begin{equation}\label{eq:MV-seq}
0\rightarrow k[\sd(\Delta)]\rightarrow k[\sd(\widetilde{\Delta})]\oplus k[\sd(\langle F_m \rangle)]\rightarrow k[\sd(\sigma)]\rightarrow 0.
\end{equation}
Here the injection on the left-hand side is given by $\alpha\mapsto (\tilde{\alpha},-\tilde{\alpha})$ and the surjection on the right-hand side by $(\beta,\gamma)\mapsto \tilde{\beta}+\tilde{\gamma}$, where $\tilde{a}$ denotes the obvious projection of $a$ on the appropriate quotient module.
(For a subcomplex $\Gamma$ of $\Delta$ and $v\in \Delta_0\setminus \Gamma_0$ it holds that $x_v\cdot k[\Gamma]=0$.)

Let $\Theta=\{\theta_1,\ldots,\theta_d\}$ be a (maximal) l.s.o.p. for $k[\sd(\Delta)]$, $k[\sd(\widetilde{\Delta})]$ and $k[\sd(F_m)]$, and such that $\{\theta_1,\ldots,\theta_{d-1}\}$ is a l.s.o.p. for $k[\sigma]$. This is possible, as the intersection of finitely many non-empty Zariski open sets is non-empty (for $k[\sigma]$, its set of maximal l.s.o.p.'s times $k^n$ (for $\theta_d$) is Zariski open in $k^{dn}$).
Dividing out by $\Theta$ in the short exact sequence (\ref{eq:MV-seq}),
which is equivalent to tensoring with $-\otimes_A A/\Theta$, yields the following $\Tor$-long exact sequence:
\begin{eqnarray*}
&\ldots&\rightarrow \Tor_1(k[\sd(\Delta)],A/\Theta)\rightarrow \Tor_1(k[\sd(\widetilde{\Delta})]\oplus k[\sd(\langle F_m\rangle)],A/\Theta)\\
&\rightarrow& \Tor_1(k[\sd(\sigma)],A/\Theta)\stackrel{\delta}{\rightarrow} \Tor_0(k[\sd(\Delta)],A/\Theta)\\
&\rightarrow& \Tor_0(k[\sd(\widetilde{\Delta})]\oplus k[\sd(\langle F_m\rangle)],A/\Theta)\rightarrow \Tor_0(k[\sd(\sigma)],A/\Theta)\rightarrow 0,
\end{eqnarray*}
where $\delta:\hspace{5pt}\Tor_1(k[\sd(\sigma)],A/\Theta)\rightarrow \Tor_0(k[\sd(\Delta)],A/\Theta)$ is the connecting homomorphism.
Below we write $k(\sd(\Delta))$ for $k[\sd(\Delta)]/\Theta$, and similarly $k(\sd(\widetilde{\Delta}))$, $k(\sd(\sigma))$ and $k(\sd(\langle F_m\rangle))$ for $k[\sd(\widetilde{\Delta})]/\Theta$, $k[\sd(\sigma)]/\Theta$ and $k[\sd(\langle F_m\rangle)]/\Theta$ resp.

Using that for $R$-modules $M$, $N$ and $Q$ it holds that $\Tor_0(M,N)\cong M\otimes_R N$, $(M\oplus N)\otimes_R Q\cong(M\otimes_R Q)\oplus (N\otimes_R Q)$ and that $M/IM\cong M\otimes_R R/I$ for an ideal $I\triangleleft R$,
we get the following exact sequence of $A$-modules:
\begin{eqnarray*}
\Tor_1(k[\sd(\sigma)],A/\Theta)\stackrel{\delta}{\rightarrow} k(\sd(\Delta))\rightarrow k(\sd(\widetilde{\Delta}))\oplus k(\sd(\langle F_m\rangle))\rightarrow k(\sd(\sigma))\rightarrow 0.
\end{eqnarray*}
Note that all the maps in this sequence are grading preserving, where $\Tor_1(k[\sd(\sigma)],A/\Theta)$ inherits the grading from (a projective grading preserving resolution of) the sequence (\ref{eq:MV-seq}).
{From} this we deduce the following commutative diagram:
$$\begin{array}{ccccccccc}
   \Tor_1(k(\sd(\sigma)))_i & \stackrel{\delta}{\to} & k(\sd(\Delta))_i &\to& k(\sd(\widetilde{\Delta}))_i\oplus k(\sd(\langle F_m\rangle))_i\\
    & & & & & & &\\
   & & \downarrow \omega^{d-2i-1}& &\ \ \downarrow (\omega^{d-2i-1},\omega^{d-2i-1})& & \\
     & & & & & & &\\
      &\empty &  k(\sd(\Delta))_{d-1-i} &\to& k(\sd(\widetilde{\Delta}))_{d-1-i}\oplus k(\sd(\langle F_m\rangle))_{d-1-i}
   \end{array}$$   where $\omega$ is a degree one element in $A$.
   Since $F_m$ is a $(d-1)$-simplex we know from the base of the induction that multiplication  $$\omega^{d-2i-1}:\hspace{5pt} k(\sd(\langle F_m\rangle))_i\rightarrow k(\sd(\langle F_m\rangle))_{d-1-i}$$
is an injection for a generic choice of $\omega$ in $A_1$.
(Note that if $G$ is a Zariski open set in $k[x_v:v\in (F_m)_0]_1$ then $G\times k[x_v:v\in\Delta_0\setminus (F_m)_0]_1$ is Zariski open in $A_1$.)

By construction, $\widetilde{\Delta}$ is shellable and therefore by the induction hypothesis the multiplication
$$\omega^{d-2i-1}:\hspace{5pt} k(\sd(\widetilde{\Delta}))_i\rightarrow k(\sd(\widetilde{\Delta}))_{d-1-i}$$
is an injection for generic $\omega$. Since the intersection of two non-empty Zariski open sets is non-empty, multiplication
   $$(\omega^{d-2i-1},\omega^{d-2i-1}):\hspace{1pt} k(\sd(\widetilde{\Delta}))_i\oplus k(\sd(\langle F_m\rangle))_i\rightarrow k(\sd(\widetilde{\Delta}))_{d-1-i}\oplus k(\sd(\langle F_m\rangle))_{d-1-i}$$
   is an injection for a generic $\omega\in A_1$.

   Our aim is to show that $\Tor_1(k[\sd(\sigma)],A/\Theta)_i=0$ for $0\leq i\leq \lfloor \frac{d-2}{2}\rfloor$.
   As soon as this is shown,
   the above commutative diagram implies that multiplication
   $$\omega^{d-2i-1}:\hspace{5pt} k(\sd(\Delta))_i\rightarrow k(\sd(\Delta))_{d-1-i}$$
   is injective for $0\leq i\leq \lfloor\frac{d-2}{2}\rfloor$ and $\omega$ as above.

   For the computation of $\Tor_1(k[\sd(\sigma)],A/\Theta)$ we consider the following exact sequence of $A$-modules:
   $$0\rightarrow\Theta A\rightarrow A\rightarrow A/\Theta\rightarrow 0.$$
   Since $\Tor_0(M,N)\cong M\otimes_R N$ and $\Tor_1(R,M)=0$ for $R$-modules $M$ and $N$, we get the following $\Tor$-long exact sequence
   $$0\rightarrow \Tor_1(A/\Theta,k[\sd(\sigma)])\rightarrow \Theta A\otimes_A k[\sd(\sigma)]\rightarrow  k[\sd(\sigma)]\rightarrow A/\Theta\otimes_A k[\sd(\sigma)]\rightarrow 0.$$
  {From} the exactness of this sequence we deduce $\Tor_1(A/\Theta,k[\sd(\sigma)])=\KER(\Theta A\otimes_A k[\sd(\sigma)]\rightarrow k[\sd(\sigma)])$.
  Since we have $\Tor_1(k[\sd(\sigma)],A/\Theta)\cong \Tor_1(A/\Theta,k[\sd(\sigma)])$, and by the fact that the isomorphism is grading preserving, we finally get that
   $\Tor_1(k[\sd(\sigma)],A/\Theta)\cong\KER(\Theta A\otimes_A k[\sd(\sigma)]\rightarrow k[\sd(\sigma)])$ as graded $A$-modules.
  The grading of $\Theta A\otimes_A k[\sd(\sigma)]$ is given by	$deg(f\otimes_A g)=deg_A (f)+deg_A (g)$, where $deg_A$ refers to the grading induced by $A$.

As mentioned before, for generic $\Theta$, $\widetilde{\Theta}:=\{\theta_1,\ldots,\theta_{d-1}\}$ is a l.s.o.p. for $k[\sd(\sigma)]$. Thus the kernel of the map
$$(\Theta A\otimes_A k[\sd(\sigma)])_i\rightarrow (k[\sd(\sigma)])_i; \hspace{5pt} b\otimes f\mapsto bf$$
is zero iff the kernel of the map
$$((\theta_d) \otimes_A (k[\sd(\sigma)]/\widetilde{\Theta}))_i\rightarrow (k[\sd(\sigma)]/\widetilde{\Theta})_i;\hspace{5pt} \theta_d\otimes f\mapsto \theta_d f$$
is zero, which is the case iff the kernel of the multiplication map
$$\theta_d:\hspace{5pt} (k[\sd(\sigma)]/\widetilde{\Theta})_{i-1}\rightarrow (k[\sd(\sigma)]/\widetilde{\Theta})_i;\hspace{5pt} f\mapsto \theta_d f$$
is zero. (We have a shift $(-1)$ in the grading since the last map $\theta_d$ increases the degree by $+1$).

By construction, $\sigma$ is a pure subcomplex of the boundary of a $(d-1)$-simplex and thus is shellable. Since $\dim(\sigma)=d-2$ the induction hypothesis applies to $\sd(\sigma)$. Thus, multiplication
$$\theta_d^{d-2i-2}:\hspace{5pt} (k[\sd(\sigma)]/\widetilde{\Theta})_i\rightarrow (k[\sd(\sigma)]/\widetilde{\Theta})_{d-i-2}$$
is an injection for $0\leq i\leq \lfloor \frac{d-3}{2}\rfloor$ for a generic degree one element $\theta_d$. In particular, multiplication
 $$\theta_d:\hspace{5pt} (k[\sd(\sigma)]/\widetilde{\Theta})_i\rightarrow (k[\sd(\sigma)]/\widetilde{\Theta})_{i+1}$$
 is injective as well. Thus, $\Tor_1(k[\sd(\sigma)],A/\Theta)_i=0$ for $1\leq i\leq \lfloor \frac{d-3}{2}\rfloor +1 = \lfloor \frac{d-1}{2}\rfloor$. In particular, $\Tor_1(k[\sd(\sigma)],A/\Theta)_i=0$ for $1\leq i\leq\lfloor \frac{d-2}{2}\rfloor$. Note that $(\Theta A\otimes_A k[\sd(\sigma)])_0=0$, hence $\Tor_1(k[\sd(\sigma)],A/\Theta)_0=0$. To summarize,
 $\Tor_1(k[\sd(\sigma)],A/\Theta)_i=0$ for $0\leq i\leq \lfloor\frac{d-2}{2}\rfloor$, which completes the proof.
\end{proof}

\section{Combinatorial Consequences} \label{CombCons}
We are now going to discuss some combinatorial consequences of Theorem \ref{mainresult}.

For a $(d-1)$-dimensional simplicial complex $\Delta$ let its $g$-vector be $g^{\Delta}:=(g_0^{\Delta},g_1^{\Delta},\ldots,g_{\lfloor\frac{d}{2}\rfloor}^{\Delta})$, where $g_0^{\Delta}=1$ and $g_i^{\Delta}=h_i^{\Delta}-h_{i-1}^{\Delta}$ for $1\leq i\leq \lfloor\frac{d}{2}\rfloor$.
A sequence $(a_0,\ldots,a_t)$ is called an $M$-sequence if it is the Hilbert function of a standard graded Artinian $k$-algebra. Macaulay \cite{Macaulay} gave a characterization of such sequences by means of numerical conditions among their elements (see e.g. \cite{StanleyGreenBook}).

Recall that shellable complexes are CM (e.g. \cite{Bruns-Herzog}). While the converse is not true, Stanley showed that these two families of complexes have the same set of $h$-vectors:
\begin{theorem}[Theorem 3.3 \cite{StanleyGreenBook}]\label{ShellableCM}
Let $s=(h_0,\ldots,h_d)$ be a sequence of integers. The following conditions are equivalent:
\begin{itemize}
\item[(i)] $s$ is the $h$-vector of a shellable simplicial complex.
\item[(ii)] $s$ is the $h$-vector of a Cohen-Macaulay simplicial complex.
\item[(iii)] $s$ is an $M$-sequence.
\end{itemize}
\end{theorem}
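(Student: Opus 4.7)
The plan is to prove the cyclic implications (i) $\Rightarrow$ (ii) $\Rightarrow$ (iii) $\Rightarrow$ (i) in that order. The first two implications are short and essentially algebraic, while the realizability direction (iii) $\Rightarrow$ (i) will be by far the main obstacle.

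For (i) $\Rightarrow$ (ii), I would use a shelling $F_1,\ldots,F_m$ together with its restriction faces $\res(F_i)$ to decompose $k[\Delta]$ as a free $k[\Theta]$-module over a generic l.s.o.p.\ $\Theta$, with one monomial basis element $\eta_i$ per facet, supported on $\res(F_i)$ and of degree $|\res(F_i)|$. Building this decomposition inductively on the shelling length (using the exact sequence relating $\langle F_1,\ldots,F_{i-1}\rangle$, $\langle F_i\rangle$ and their intersection, which by the shelling hypothesis is the boundary of $\res(F_i)$ inside $F_i$) establishes Cohen-Macaulayness and simultaneously the shelling formula $h_i^{\Delta}=\#\{j:|\res(F_j)|=i\}$, which will be reused in the reverse direction. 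For (ii) $\Rightarrow$ (iii), since $\Delta$ is CM the l.s.o.p.\ $\Theta$ is a regular sequence on $k[\Delta]$, so $k[\Delta]/\Theta$ is a standard graded Artinian $k$-algebra whose Hilbert function in degree $i$ equals $h_i^{\Delta}$. Macaulay's characterization of Hilbert functions of such algebras as $M$-sequences then completes the step.

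The hard direction is (iii) $\Rightarrow$ (i), which requires producing an explicit shellable complex with a prescribed $M$-sequence as $h$-vector. Given $(h_0,\ldots,h_d)$, the plan is first to apply the other half of Macaulay's theorem to obtain a multicomplex (order ideal of monomials) $N\subseteq k[y_1,\ldots,y_s]$ with exactly $h_i$ monomials of degree $i$ for $0\leq i\leq d$. The remaining task is to translate $N$ into a pure $(d-1)$-dimensional simplicial complex via a polarization-type construction: to each monomial $m\in N$ of degree $r$ associate a $d$-subset $F_m$ of a suitably large vertex set, together with a canonical subface $R_m\subseteq F_m$ of cardinality $r$, chosen so that $R_{m'}\subseteq F_m$ whenever $m'\mid m$ in $N$, and so that distinct monomials yield distinct facets with distinct distinguished subfaces. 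Order the $F_m$ by any linear extension of divisibility (for instance, by increasing degree, refined lexicographically) to obtain a candidate shelling.

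The technical heart of the argument is then to verify that this linear order actually is a shelling, with $\res(F_m)=R_m$ at each stage. This is where the order-ideal property of $N$ is indispensable: one must show that every face of $F_m$ strictly containing $R_m$ is covered by some earlier facet $F_{m'}$ (the candidate $m'$ being a suitable proper divisor of $m$ already present in $N$), while $R_m$ itself is genuinely new at this stage. Once shellability is verified, the shelling formula from the proof of (i) $\Rightarrow$ (ii) yields $h_i^{\Delta}=\#\{m\in N:\deg m=i\}=h_i$, closing the cycle. I expect the bookkeeping required to define $F_m$ and $R_m$ compatibly with divisibility — essentially a squarefree model of the multicomplex $N$ — to be the only part where genuine combinatorial care is needed.
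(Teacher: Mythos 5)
This statement is quoted by the paper from Stanley's book and is not proved there, so your proposal has to stand on its own as a reconstruction of the classical argument. Your treatment of (i) $\Rightarrow$ (ii) (the Kind--Kleinschmidt style decomposition of $k[\Delta]$ as a free $k[\Theta]$-module with basis monomials supported on the restriction faces, which simultaneously gives $h_i^{\Delta}=\#\{j:\,|\res(F_j)|=i\}$) and of (ii) $\Rightarrow$ (iii) (Hilbert function of the Artinian reduction plus Macaulay's theorem, after passing to an infinite field if necessary so that a linear s.o.p.\ exists) is sound and follows the standard route.

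The genuine gap is in (iii) $\Rightarrow$ (i). First, your stated verification goal is backwards and in fact self-contradictory: you say one must show that every face of $F_m$ strictly containing $R_m$ is covered by an earlier facet while $R_m$ is new, but if a face $G\supseteq R_m$ lay in an earlier facet then so would $R_m$, since the earlier complex is closed under taking subsets. The correct condition for $\res(F_m)=R_m$ is the opposite one: for each $v\in R_m$ the codimension-one face $F_m\setminus\{v\}$ (and hence every face \emph{not} containing $R_m$) must lie in some earlier facet, while $R_m$ itself must lie in no earlier facet; the faces containing $R_m$ are then exactly the new ones. Second, and more importantly, the entire content of this direction is the explicit construction of the $d$-sets $F_m$ and the ordering, and your proposal only lists desiderata (``a suitably large vertex set,'' ``chosen so that $R_{m'}\subseteq F_m$ whenever $m'\mid m$'') without producing them; the compatibility condition you impose is not sufficient to force a shelling, and ``any linear extension of divisibility'' will not work for an arbitrary assignment satisfying it. The classical proof (Stanley, following the order-ideal model you invoke) succeeds precisely because it gives a concrete encoding of each monomial as a $d$-subset for which the covering of the faces $F_m\setminus\{v\}$, $v\in R_m$, by earlier facets and the newness of $R_m$ can be checked by hand for a specific (degree-then-lexicographic) order. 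As written, the hardest implication is deferred rather than proved.
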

\noindent We are now going to prove some $f$-vector corollaries of Theorem \ref{mainresult}, using Theorem \ref{ShellableCM} and Theorem \ref{Brenti, Welker}.

\begin{proof}[Proof of Corollary \ref{cor:CM,M-seq}]
For any simplicial complex $\Gamma$, $h^{\sd(\Gamma)}$ is a function of $h^{\Gamma}$. (For an explicit formula, see Theorem \ref{Brenti, Welker} below, obtained in \cite{Brenti-Welker}.)
Hence, together with Theorem \ref{ShellableCM} we can assume that $\Delta$ is shellable. Let $\dim\Delta=d-1$.

By Theorem \ref{mainresult}, for a generic l.s.o.p. $\Theta$ and a generic degree one element $\omega$, multiplication
$$\omega^{d-1-2i}:\hspace{5pt} (k[\sd(\Delta)]/\Theta)_i\rightarrow(k[\sd(\Delta)]/\Theta)_{d-1-i}$$
is an injection for $0\leq i\leq \lfloor\frac{d-2}{2}\rfloor$, hence $$\omega:\hspace{5pt} (k[\sd(\Delta)]/\Theta)_i\rightarrow(k[\sd(\Delta)]/\Theta)_{i+1}$$
is an injection as well. (This conclusion is vacuous for $d\leq 1$.)
Therefore, as Cohen-Macaulayness implies $h_i^{\sd(\Delta)}=\dim_k(k[\sd(\Delta)]/\Theta)_i$, we get that $g_i^{\sd(\Delta)}=\dim_k (k[\sd(\Delta)]/(\Theta, \omega))_i$ for $0\leq i\leq \lfloor\frac{d}{2}\rfloor$. Hence $g^{\sd(\Delta)}$ is an $M$-sequence.
\end{proof}

\begin{corollary}\label{cor:CM,ineq}
Let $\Delta$ be a $(d-1)$-dimensional Cohen-Macaulay simplicial complex. Then $h_{d-i-1}^{\sd(\Delta)}\geq h_i^{\sd(\Delta)}$ for any $0\leq i\leq \lfloor \frac{d-2}{2}\rfloor$.
\end{corollary}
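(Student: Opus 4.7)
The plan is to deduce this directly from Theorem \ref{mainresult} combined with the reduction already used in the proof of Corollary \ref{cor:CM,M-seq}. First I would reduce to the shellable case: since Theorem \ref{ShellableCM} gives that every Cohen-Macaulay $h$-vector is realized by a shellable complex, and since (as noted at the start of the proof of Corollary \ref{cor:CM,M-seq}) $h^{\sd(\Gamma)}$ depends only on $h^\Gamma$, it suffices to prove the inequality when $\Delta$ is shellable.

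Next I would invoke Theorem \ref{mainresult}: fix an infinite extension field $k$ (since $h^{\sd(\Delta)}$ is insensitive to field extensions on the CM side), and pick a generic l.s.o.p.\ $\Theta$ for $k[\sd(\Delta)]$ together with a generic degree-one element $\omega$ witnessing the almost strong Lefschetz property. By definition, multiplication
\[
\omega^{d-1-2i}\colon (k[\sd(\Delta)]/\Theta)_i \longrightarrow (k[\sd(\Delta)]/\Theta)_{d-1-i}
\]
is an injection for every $0\leq i\leq \lfloor\frac{d-2}{2}\rfloor$.

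Finally, since $\sd(\Delta)$ is Cohen-Macaulay (Cohen-Macaulayness being a topological property, and $\Delta$ is CM), the Hilbert function of $k[\sd(\Delta)]/\Theta$ equals the $h$-vector of $\sd(\Delta)$: $h_j^{\sd(\Delta)} = \dim_k (k[\sd(\Delta)]/\Theta)_j$ for all $j$. Comparing the source and target dimensions of the above injection yields
\[
h_i^{\sd(\Delta)} \;=\; \dim_k (k[\sd(\Delta)]/\Theta)_i \;\leq\; \dim_k (k[\sd(\Delta)]/\Theta)_{d-1-i} \;=\; h_{d-1-i}^{\sd(\Delta)}
\]
for $0\leq i\leq \lfloor\frac{d-2}{2}\rfloor$, which is precisely the claim. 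There is no real obstacle here since the hard work is already contained in Theorem \ref{mainresult}; the only minor care-point is making sure the reduction to the shellable case is legitimate, which is immediate from Theorem \ref{ShellableCM} together with the $f$-vector formula for barycentric subdivisions used in the companion corollary.
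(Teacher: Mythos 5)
Your proposal is correct and is essentially the paper's own proof: reduce to the shellable case via Theorem \ref{ShellableCM} and the fact (Theorem \ref{Brenti, Welker}) that $h^{\sd(\Gamma)}$ depends only on $h^{\Gamma}$, then apply the almost strong Lefschetz injections from Theorem \ref{mainresult} and compare dimensions using $h_j^{\sd(\Delta)}=\dim_k(k[\sd(\Delta)]/\Theta)_j$. The extra remark about passing to an infinite field is a harmless (and reasonable) elaboration of a point the paper leaves implicit.
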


\begin{proof}
Again, by Theorems \ref{Brenti, Welker} and \ref{ShellableCM} we can assume that $\Delta$ is shellable.
By Theorem \ref{mainresult}, for a generic l.s.o.p. $\Theta$ of $k[\sd(\Delta)]$ multiplication
$$\omega^{d-1-2i}:\hspace{5pt} (k[\sd(\Delta)]/\Theta)_i\rightarrow(k[\sd(\Delta)]/\Theta)_{d-1-i}$$
is an injection for $1\leq i\leq \lfloor\frac{d-2}{2}\rfloor$ and a generic degree one element $\omega$.
Since $h_i^{\sd(\Delta)}=\dim_k (k[\sd(\Delta)]/\Theta)_i$ this implies $h_i^{\sd(\Delta)}\leq h_{d-1-i}^{\sd(\Delta)}$.
\end{proof}

Next we verify the almost strong Lefschetz property for polytopal complexes. The proof essentially follows the same steps as the one of Theorem \ref{mainresult}; we will indicate the differences.

A \emph{polytopal complex} is a finite, non-empty collection $\mathcal{C}$ of polytopes (called the faces of $\mathcal{C}$) in some $\mathbb{R}^t$ that contains all the faces of its polytopes, and such that the intersection of two of its polytopes is a face of each of them. Notions like facets, dimension, pureness and barycentric subdivision are defined as usual.

Shellability extends to polytopal complexes as follows (see e.g. \cite{Ziegler} for more details).

\begin{definition}
Let $\mathcal{C}$ be a pure $(d-1)$-dimensional polytopal complex. A \emph{shelling} of $\mathcal{C}$ is a linear ordering $F_1,F_2,\ldots,F_m$ of the facets of $\mathcal{C}$ such that either $\mathcal{C}$ is $0$-dimensional, or it satisfies the following conditions:
\begin{itemize}
	\item[(i)] The boundary complex $\mathcal{C}(\partial F_1)$ of the first facet $F_1$ has a shelling.
	\item[(ii)] For $1<j\leq m$ the intersection of the facet $F_j$ with the previous facets is non-empty and is the beginning segment of a shelling of the $(d-2)$-dimensional boundary complex of $F_j$, that is,
	$$F_j\cap(\bigcup_{i=1}^{j-1}F_i)=G_1\cup G_2\cup\ldots\cup G_r$$
	for some shelling $G_1,G_2,\ldots,G_r,\ldots,G_t$ of $\mathcal{C}(\partial F_j)$, and $1\leq r \leq t$.\\
\end{itemize}
A polytopal complex is called \emph{shellable} if it is pure and has a shelling.
\end{definition}

\begin{theorem}\label{cor:ShellablePolytopalComplex}
Let $\Delta$ be a shellable $(d-1)$-dimensional polytopal complex. Then $\sd(\Delta)$ is almost strong Lefschetz over $\mathbb{R}$. In particular, $g^{\sd(\Delta)}$ is an $M$-sequence.
\end{theorem}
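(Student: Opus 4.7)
The plan is to replay the double induction of Theorem \ref{mainresult}, on the number of facets and on the dimension of $\Delta$, adapting only the base case from the simplicial setting. The definition of shellability of a polytopal complex is arranged precisely so that the Mayer-Vietoris decomposition used in the induction step still yields a shellable intersection of one lower dimension, so the inductive skeleton will transfer essentially verbatim.

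For the base case (a single polytopal facet), the analog of Proposition \ref{prop:LefSdSimplex} is needed: for any $(d-1)$-polytope $P$, the complex $\sd(\langle P\rangle)$ is almost strong Lefschetz over $\mathbb{R}$. Here I would use that $\sd(\langle P\rangle)=\cone(\sd(\partial P))$ and that $\sd(\partial P)$ can be realized as the boundary complex of a simplicial $(d-1)$-polytope --- concretely, by performing iterated stellar subdivisions (equivalently, pulling at interior points) at the proper faces of $P$, processed in decreasing order of dimension; each such step preserves polytopality. Stanley's Hard Lefschetz theorem for simplicial polytopes, Theorem \ref{thm:HLforPolytopes}, therefore gives the strong Lefschetz property of $\sd(\partial P)$ over $\mathbb{R}$, and Lemma \ref{SL:cone} promotes this to $(d-1)$-Lefschetz for $\sd(\langle P\rangle) = \cone(\sd(\partial P))$, i.e., almost strong Lefschetz. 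The zero-dimensional case is trivial as in Theorem \ref{mainresult}.

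The inductive step then proceeds exactly as in Theorem \ref{mainresult}: given a shelling $F_1, \ldots, F_m$ of $\Delta$ with $m\geq 2$, set $\widetilde{\Delta}:=\langle F_1, \ldots, F_{m-1}\rangle$ and $\sigma := \widetilde{\Delta} \cap \langle F_m\rangle$. The Mayer-Vietoris short exact sequence of $A$-modules, the resulting $\Tor$ long exact sequence after quotienting by a generic l.s.o.p.\ $\Theta$, and the commutative diagram of multiplications by $\omega^{d-2i-1}$ are identical to the simplicial case. Induction on the number of facets gives almost strong Lefschetz for $\sd(\widetilde{\Delta})$; the base case gives it for $\sd(\langle F_m\rangle)$; and the polytopal shelling axiom forces $\sigma$ to be a shellable pure $(d-2)$-dimensional polytopal complex, so induction on dimension gives almost strong Lefschetz for $\sd(\sigma)$. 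The vanishing
$$\Tor_1(k[\sd(\sigma)], A/\Theta)_i = 0, \qquad 0 \leq i \leq \lfloor (d-2)/2\rfloor,$$
follows from the injectivity of multiplication by a single generic $\theta_d$ on the $\{\theta_1,\ldots,\theta_{d-1}\}$-quotient of $k[\sd(\sigma)]$, word for word as in the simplicial proof. The $M$-sequence conclusion for $g^{\sd(\Delta)}$ follows from the almost strong Lefschetz property exactly as in Corollary \ref{cor:CM,M-seq}.

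The main obstacle I anticipate is confirming that $\sd(\partial P)$ is realizable as the boundary of a simplicial polytope, so that Theorem \ref{thm:HLforPolytopes} can legitimately be invoked in the base case; this is the step that replaces Murai's edge-decomposability argument and is also what forces the restriction to $k=\mathbb{R}$, since Stanley's theorem relies on the Hodge-Riemann relations and is not known in positive characteristic at this level of generality. Everything else is a straightforward transport of the simplicial argument.
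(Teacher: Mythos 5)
Your proposal matches the paper's own (sketched) proof essentially step for step: the same double induction, with the base case handled by realizing $\sd(\partial P)$ as the boundary complex of a simplicial polytope (the paper cites Ewald--Shephard where you argue via pulling/stellar subdivisions), then invoking Theorem \ref{thm:HLforPolytopes} and Lemma \ref{SL:cone}, and with the inductive step transported verbatim from Theorem \ref{mainresult} using the polytopal shelling axiom to keep $\sigma$ shellable. Your closing remark that this is where the classical $g$-theorem (hence $k=\mathbb{R}$) is genuinely needed is exactly the point the paper also makes.
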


\begin{proof}
We give a sketch of the proof, indicating the needed modifications w.r.t. the proof of Theorem \ref{mainresult}.

We use induction on the dimension and the number of facets $f_{d-1}^{\Delta}$ of $\Delta$. For $f_{d-1}^{\Delta}=1$, note that the barycentric subdivision of a polytope is combinatorially isomorphic to a simplicial polytope (see \cite{EwaldShephard}).


Theorem \ref{thm:HLforPolytopes} implies that $\sd(\partial P)$ is $(d-1)$-Lefschetz over $\mathbb{R}$. By Lemma \ref{SL:cone} the same holds for $\cone(\sd(\partial P))=\sd(P)$. Together with the $\dim\Delta=0$ case, this provides the base of the induction.

The induction step works as in the proof of Theorem \ref{mainresult}.
\end{proof}
Note that in the above proof we really need the classical $g$-theorem, whereas in the proof of Theorem \ref{mainresult} it was not required.

\section{New Inequalities for the refined Eulerian statistics on permutations} \label{permutations}
In \cite{Brenti-Welker} Brenti and Welker give a precise description of the $h$-vector of the barycentric subdivision of a simplicial complex in terms of the $h$-vector of the original complex.
The coefficients that occur in this representation are a refinement of the Eulerian statistics on permutations.

Let $S_d$ denotes the symmetric group on $[d]$, and let $\sigma\in S_d$.
We write $D(\sigma):=\{~i\in[d-1]~|~\sigma(i)>\sigma(i+1)\}$ for the descent set of $\sigma$ and $\des(\sigma):=\#D(\sigma)$ counts the number of descents of $\sigma$. For $0\leq i\leq d-1$ and $1\leq j\leq d$ we set $A(d,i,j):=\#\{~\sigma\in S_d~|~des(\sigma)=i,\;\sigma(1)=j\}$. Brenti and Welker showed that these numbers satisfy the following symmetry:
\begin{lemma} \cite[Lemma 2.5]{Brenti-Welker} \label{sym}
$$A(d,i,j)=A(d,d-1-i,d+1-j)$$
for $d\geq 1$, $1\leq j\leq d$ and $0\leq i\leq d-1$.
\end{lemma}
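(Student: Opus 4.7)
The plan is to prove this symmetry by an explicit sign-reversing involution on $S_d$, mirroring the classical proof that Eulerian numbers satisfy $A(d,i) = A(d,d-1-i)$, but refined so as to track the value $\sigma(1)$ as well.

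Concretely, I would define the map $\Phi \colon S_d \to S_d$ by $\Phi(\sigma)(k) := d+1-\sigma(k)$ for every $k \in [d]$. This is clearly an involution, and hence a bijection of $S_d$. The first step is to observe that for any $1 \leq k \leq d-1$, one has $\Phi(\sigma)(k) > \Phi(\sigma)(k+1)$ iff $\sigma(k) < \sigma(k+1)$. Thus the descent set of $\Phi(\sigma)$ is the complement in $[d-1]$ of the descent set of $\sigma$, and consequently
\[
\des(\Phi(\sigma)) \;=\; (d-1) - \des(\sigma).
\]

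The second step is the trivial observation that $\Phi(\sigma)(1) = d+1-\sigma(1)$. Combining the two, $\Phi$ restricts to a bijection
\[
\{\sigma \in S_d : \des(\sigma)=i,\ \sigma(1)=j\} \;\longleftrightarrow\; \{\sigma \in S_d : \des(\sigma)=d-1-i,\ \sigma(1)=d+1-j\},
\]
which yields exactly the desired equality $A(d,i,j) = A(d,d-1-i,d+1-j)$.

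There is no real obstacle here: the entire argument is a one-line involution, and the only routine check is that complementing values flips descents to ascents at each position. The lemma is therefore essentially immediate, and the proof is just a matter of writing the map $\Phi$ down and verifying these two properties.
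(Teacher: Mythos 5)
Your complementation involution $\Phi(\sigma)(k)=d+1-\sigma(k)$ is correct: it complements the descent set and sends $\sigma(1)=j$ to $d+1-j$, giving the stated bijection and hence $A(d,i,j)=A(d,d-1-i,d+1-j)$. The paper itself only cites this as \cite[Lemma 2.5]{Brenti-Welker} without reproving it, and your argument is exactly the standard proof of that lemma; the only quibble is that calling it ``sign-reversing'' is a misnomer---it is simply a descent-complementing involution.
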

The following theorem establishes the relation between the $h$-vector of a simplicial complex and the $h$-vector of its barycentric subdivision. As stated in \cite{Brenti-Welker} the result actually holds in the generality of Boolean cell complexes.

\begin{theorem}\cite[Theorem 2.2]{Brenti-Welker}\label{Brenti, Welker}
Let $\Delta$ be a $(d-1)$-dimensional simplicial complex and let $\sd(\Delta)$ be its barycentric subdivision. Then
$$h_j^{\sd(\Delta)}=\sum_{r=0}^d A(d+1,j,r+1)h_r^{\Delta}$$
for $0\leq j\leq d$.
\end{theorem}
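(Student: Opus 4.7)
The plan is to reduce Theorem \ref{Brenti, Welker} to an explicit chain-counting identity and then identify the resulting numerical coefficients as the refined Eulerian numbers via a combinatorial bijection. First I would express $f^{\sd(\Delta)}$ in terms of $f^{\Delta}$ using the fact that a $(k-1)$-face of $\sd(\Delta)$ is a chain $F_1 \subsetneq \cdots \subsetneq F_k$ of non-empty faces of $\Delta$; grouping such chains by the cardinality $s$ of their top element $F_k$ yields
\[ f_{k-1}^{\sd(\Delta)} = \sum_{s=1}^{d} f_{s-1}^{\Delta}\, M(s,k), \]
where $M(s,k)$ is the number of strict chains $\emptyset \subsetneq G_1 \subsetneq \cdots \subsetneq G_k = [s]$ in the Boolean lattice $2^{[s]}$ (equivalently, ordered set partitions of $[s]$ into $k$ non-empty blocks).

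I would then pass to $h$-polynomials via the standard substitution $h(\Gamma, x) = (1-x)^{\dim \Gamma + 1} f(\Gamma, x/(1-x))$ and simultaneously invert $f^{\Delta}$ in terms of $h^{\Delta}$ using the binomial relation $f_{s-1}^{\Delta} = \sum_r \binom{d-r}{s-r}\, h_r^{\Delta}$. After rearrangement one obtains a linear identity
\[ h_j^{\sd(\Delta)} = \sum_{r=0}^{d} c(d,j,r)\, h_r^{\Delta} \]
in which the integer coefficients $c(d,j,r)$ depend only on $d$, $j$, $r$, and the theorem reduces to the purely numerical identity $c(d,j,r) = A(d+1, j, r+1)$.

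The heart of the proof is this last identification. I would establish it through the classical bijection sending an ordered set partition of an $s$-subset $S \subseteq [d+1]$ to the permutation of $S$ obtained by listing successive blocks in increasing order; the $(1-x)^{d-s+1}$ factors arising from the $f$-to-$h$ passage correspond to inserting the remaining elements of $[d+1] \setminus S$, and the resulting permutation $\pi \in S_{d+1}$ has descent count matching the $h$-degree $j$ while its first entry $\pi(1)$ tracks the parameter $r+1$. A consistency check is provided by the simplex $\Delta = \sigma^{d-1}$ with $h^{\Delta} = (1, 0, \ldots, 0)$, where the formula collapses to $h_j^{\sd(\sigma^{d-1})} = A(d+1, j, 1)$, which matches the Eulerian polynomial for $S_d$ since $\sd(\sigma^{d-1})$ is a cone over $\sd(\partial \sigma^{d-1})$.

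The hard part will be the bookkeeping in the combinatorial bijection of the final step, since both statistics $\des(\pi)$ and $\pi(1)$ must be tracked simultaneously as the chain data coming from faces of $\Delta$ is interleaved with the free insertions arising from the $(1-x)$-factors. Lemma \ref{sym} provides a useful palindromic consistency check. As an alternative route one can argue by induction on the number of facets using a shelling, mirroring the inductive strategy of Theorem \ref{mainresult}: adjoining a shelling facet changes $h^{\Delta}$ by a unit vector, so it suffices to verify that the corresponding change in $h^{\sd(\Delta)}$ is recorded correctly by the refined Eulerian generating function on the boundary intersection, which is itself a subcomplex of the boundary of a simplex.
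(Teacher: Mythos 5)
This statement is not proved in the paper at all: it is quoted verbatim from Brenti and Welker \cite[Theorem 2.2]{Brenti-Welker}, so there is no internal proof to compare your outline with; judged on its own merits, your outline has a genuine gap at precisely the step you yourself call the heart of the proof. The reduction is fine: $f_{k-1}^{\sd(\Delta)}=\sum_s f_{s-1}^{\Delta}M(s,k)$ with $M(s,k)$ the number of ordered set partitions, combined with the universal invertible conversions between $f$- and $h$-vectors, does reduce the theorem to the purely numerical identity $c(d,j,r)=A(d+1,j,r+1)$, and your simplex consistency check is correct. But the identification itself is asserted, not proved, and the mechanism you describe cannot work as stated: the passage from $f$ to $h$ introduces alternating signs (the factors $(1-x)^{d-s}$, equivalently $h_j=\sum_k(-1)^{j-k}\binom{d-k}{j-k}f_{k-1}$), and the resulting expression $\sum_s\binom{d-r}{s-r}(1-x)^{d-s}\bigl(\sum_k M(s,k)x^k\bigr)$ exhibits genuine cancellation --- already for $d=2$, $r=0$ the $x^2$-coefficients cancel to give $A(3,2,1)=0$ --- so ``the $(1-x)$-factors correspond to inserting the elements of $[d+1]\setminus S$'' cannot be a bijection. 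To close this you would need a sign-reversing involution, a sign-free reformulation (e.g.\ a multichain/$P$-partition count in which $1/(1-x)$ rather than $(1-x)$ powers appear), or the generating-function and recurrence manipulations that Brenti and Welker actually perform; none of these is supplied.

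Your fallback route via shellings has the same hole and two further issues. First, the theorem concerns arbitrary simplicial complexes, so you must argue that verification on shellable complexes determines the coefficients; this is repairable (shelling steps change $h^{\Delta}$ by $e_r$ for every $1\le r\le d$ and the simplex gives $e_0$, so the linear functionals are pinned down), but it needs to be said. Second and more seriously, ``verify that adjoining a facet with restriction face of size $r$ changes $h^{\sd(\Delta)}$ by $\bigl(A(d+1,j,r+1)\bigr)_j$'' is exactly the identity to be proved in local form --- it is how the paper \emph{uses} the cited theorem in the proof of Corollary \ref{inequalities}, not a route around it --- so unless you independently count the chains of $\sd(\langle F_m\rangle)$ whose top face contains $\res(F_m)$ and match that count, descent by descent and first letter by first letter, with the refined Eulerian numbers, this alternative is either circular or again reduces to the missing combinatorial core.
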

In order to prove some new inequalities for the $A(d,i,j)$'s we will need the following characterization of the $h$-vector of a shellable simplicial complex due to McMullen and Walkup.

\begin{proposition}\cite[Corollary 5.1.14]{Bruns-Herzog} \label{McMullen, Walkup}
Let $\Delta$ be a shellable $(d-1)$-dimensional simplicial complex with shelling $F_1,\ldots,F_m$. For $2\leq j\leq m$, let $r_j$ be the number of facets of $\langle F_j\rangle\cap\langle F_1,\ldots,F_{j-1}\rangle$ and set $r_1=0$. Then
$h_i^{\Delta}=\#\{~j~|~r_j=i\}$ for $i=0,\ldots,d$. In particular, the numbers $h_j^{\Delta}$ do not depend on the particular shelling.
\end{proposition}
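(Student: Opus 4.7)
The plan is to use the restriction-face formulation of a shelling (condition (b) in the definition) to partition the face poset of $\Delta$ into Boolean intervals, compute the $f$-vector from that partition, and then convert to the $h$-vector by a one-line binomial identity.

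First, set $R_j := \res(F_j)$ for $2 \le j \le m$ and $R_1 := \emptyset$. Condition (b) yields the disjoint decomposition
\[ \Delta = \bigsqcup_{j=1}^{m} \{F \in \Delta : R_j \subseteq F \subseteq F_j\}, \]
because each face $F \in \Delta$ lies in a unique smallest sub-complex $\langle F_1,\ldots,F_j\rangle$ containing it, and, by (b), membership in $\{F : F \in \langle F_1,\ldots,F_j\rangle,\, F \notin \langle F_1,\ldots,F_{j-1}\rangle\}$ is equivalent to $R_j \subseteq F \subseteq F_j$ (any face containing $R_j$ cannot already belong to some earlier $\langle F_k\rangle$, else $R_j$ would too).

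Second, I would identify $r_j$ with $|R_j|$. By condition (a) the intersection $\langle F_j\rangle \cap \langle F_1,\ldots,F_{j-1}\rangle$ is generated by maximal proper faces of $\langle F_j\rangle$, i.e.\ by sets of the form $F_j\setminus\{v\}$ with $v \in F_j$; one then checks that these generators are precisely $\{F_j\setminus\{v\} : v \in R_j\}$, so the number of them is $|R_j| = r_j$.

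Third, counting the $i$-subsets of $F_j$ containing $R_j$ gives
\[ f_{i-1}^{\Delta} = \sum_{j=1}^{m} \binom{d-r_j}{i-r_j}. \]
Substituting into $\sum_{i} h_i^{\Delta}\, x^{d-i} = \sum_{i} f_{i-1}^{\Delta}(x-1)^{d-i}$, swapping the order of summation, and applying the binomial theorem via the substitution $k=i-r_j$ yields
\[ \sum_{i=0}^{d} h_i^{\Delta}\, x^{d-i} = \sum_{j=1}^{m} \sum_{k=0}^{d-r_j} \binom{d-r_j}{k} (x-1)^{d-r_j-k} = \sum_{j=1}^{m} x^{d-r_j}. \]
Comparing coefficients of $x^{d-i}$ on both sides gives $h_i^{\Delta} = \#\{j : r_j = i\}$; since this expression makes no reference to the particular shelling, the $h$-numbers are shelling-independent. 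The main obstacle is the disjoint-interval decomposition and the identification $r_j = |R_j|$; once these are established, the rest is routine bookkeeping.
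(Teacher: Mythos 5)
Your argument is correct. The paper does not prove this proposition at all (it is quoted from Bruns--Herzog), and your proof is the standard one: the shelling partitions $\Delta$ into the Boolean intervals $[\res(F_j),F_j]$, the identification $r_j=\#\res(F_j)$ is exactly the remark the paper makes right after the statement, and the binomial-theorem computation converting $f_{i-1}=\sum_j\binom{d-r_j}{i-r_j}$ into $\sum_i h_i x^{d-i}=\sum_j x^{d-r_j}$ is carried out correctly, yielding $h_i=\#\{j: r_j=i\}$ and hence shelling-independence. The only steps you compress -- that the set in condition (b) is an up-set of $2^{F_j}$ (so having a unique minimal element makes it the full interval $[\res(F_j),F_j]$), and that the facets of $\langle F_j\rangle\cap\langle F_1,\ldots,F_{j-1}\rangle$ are exactly the $F_j\setminus\{v\}$ with $v\in\res(F_j)$ -- are routine and you indicate the right reasons for both.
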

It is easily seen that $r_j=\#\res(F_j)$. We will use this fact in the proof of the following corollary.

\begin{corollary}\label{inequalities}
\begin{itemize}
\item[(i)] $A(d+1,j,r)\leq A(d+1,d-1-j,r)$ for $d\geq 0$, $1\leq r\leq d+1$ and $0\leq j\leq \lfloor\frac{d-2}{2}\rfloor$.
	\item[(ii)] $$A(d+1,0,r+1)\leq A(d+1,1,r+1)\leq\ldots\leq A(d+1,\lfloor\frac{d}{2}\rfloor,r+1)$$
	and
	$$A(d+1,d,r+1)\leq A(d+1,d-1,r+1)\leq\ldots\leq A(d+1,\lceil\frac{d}{2}\rceil,r+1)$$
	 for $d\geq 1$ and $1\leq r\leq d$.(For $d$ odd, $A(d+1,\lfloor \frac{d}{2}\rfloor,r+1)$ may be larger or smaller then $A(d+1,\lceil \frac{d}{2}\rceil,r+1)$.)
\end{itemize}
\end{corollary}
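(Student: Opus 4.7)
The plan is to convert the Lefschetz-type inequalities for the $h$-vector of $\sd(\Delta)$ into individual inequalities on the coefficients $A(d+1,i,j)$ via the Brenti--Welker formula (Theorem \ref{Brenti, Welker}), by feeding in a carefully chosen family of shellable complexes whose $h$-vectors isolate one Brenti--Welker column at a time.

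First I would record the two $h$-vector inequalities that serve as input. For any shellable $(d-1)$-complex $\Delta$, Corollary \ref{cor:CM,M-seq} yields $h_i^{\sd(\Delta)}\le h_{i+1}^{\sd(\Delta)}$ for $0\le i\le \lfloor d/2\rfloor-1$, and Corollary \ref{cor:CM,ineq} yields $h_i^{\sd(\Delta)}\le h_{d-1-i}^{\sd(\Delta)}$ for $0\le i\le \lfloor (d-2)/2\rfloor$. Substituting the identity $h_j^{\sd(\Delta)}=\sum_{r=0}^d A(d+1,j,r+1)\,h_r^\Delta$ from Theorem \ref{Brenti, Welker} translates these into
$$\sum_{r=0}^d\bigl(A(d+1,i+1,r+1)-A(d+1,i,r+1)\bigr)\,h_r^\Delta \;\ge\; 0$$
and
$$\sum_{r=0}^d\bigl(A(d+1,d-1-i,r+1)-A(d+1,i,r+1)\bigr)\,h_r^\Delta \;\ge\; 0,$$
valid for every shellable $(d-1)$-complex $\Delta$. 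By Theorem \ref{ShellableCM}, $(h_0^\Delta,\ldots,h_d^\Delta)$ ranges over \emph{all} $M$-sequences of length $d+1$.

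The crucial step, which isolates a single Brenti--Welker column $r_0\in\{0,1,\ldots,d\}$, is to feed in the family of $M$-sequences
$$h^{(n)} \;=\; \left(\binom{n-1}{0},\binom{n}{1},\ldots,\binom{n+r_0-1}{r_0},0,\ldots,0\right),$$
namely the Hilbert functions of the standard graded Artinian algebras $k[y_1,\ldots,y_n]/(y_1,\ldots,y_n)^{r_0+1}$, padded with zeros to length $d+1$. Since $h^{(n)}_{r_0}$ is a polynomial in $n$ of degree $r_0$ whereas $h^{(n)}_i$ has degree $i$ for $i<r_0$, one has $h^{(n)}_i/h^{(n)}_{r_0}\to 0$ as $n\to\infty$ for $i<r_0$, while $h^{(n)}_i=0$ for $i>r_0$. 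Substituting $h^{(n)}$ into either inequality above, dividing by $h^{(n)}_{r_0}$, and letting $n\to\infty$ kills every term except the one at $r=r_0$, yielding
$$A(d+1,i+1,r_0+1)\ge A(d+1,i,r_0+1)\quad\text{and}\quad A(d+1,d-1-i,r_0+1)\ge A(d+1,i,r_0+1).$$
Letting $r_0\in\{0,\ldots,d\}$ in the second inequality proves part~(i) in full, and letting $r_0\in\{1,\ldots,d\}$ in the first and chaining over $0\le i\le \lfloor d/2\rfloor-1$ produces the increasing chain in part~(ii). The decreasing chain in part~(ii) is then immediate from the increasing one via Lemma \ref{sym}, which sends $A(d+1,i,r+1)$ to $A(d+1,d-i,d+1-r)$ and hence reflects an increasing chain at column $r+1$ into a decreasing chain at column $d+1-r$.

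The main obstacle is the isolation step: a single inequality $\sum_r c_r h_r^\Delta\ge 0$ over all $M$-sequences does not a priori force each $c_r\ge 0$, since the cone of $M$-sequences is a proper subcone of $\mathbb{R}_{\ge 0}^{d+1}$. What rescues the argument is the availability of the family $h^{(n)}$ above, whose mass can be made arbitrarily lopsided toward any single index $r_0$, so that the limit of the inequality collapses to a coefficient-wise statement at $r_0$.
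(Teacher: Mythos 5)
Your proposal is correct, but it follows a genuinely different route from the paper. The paper stays at the module level: it removes the last facet $F_m$ of a shelling, forms the exact sequence $0\to I/(I\cap\Theta)\to k[\sd(\Delta)]/\Theta\to k[\sd(\widetilde{\Delta})]/\Theta\to 0$, observes via Proposition \ref{McMullen, Walkup} and Theorem \ref{Brenti, Welker} that $\dim_k(I/(I\cap\Theta))_i=A(d+1,i,r_m+1)$ with $r_m=\#\res(F_m)$, and then transfers the injectivity of $\omega^{d-1-2i}$ (Theorem \ref{mainresult}) from $k[\sd(\Delta)]/\Theta$ to the kernel module, so a single Brenti--Welker column is isolated geometrically by the choice of the last restriction face (columns of all relevant sizes being realized by initial segments of a shelling of the boundary of a simplex). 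You instead work purely numerically: you take the already-proved $h$-vector consequences (Corollaries \ref{cor:CM,M-seq} and \ref{cor:CM,ineq}, which precede Corollary \ref{inequalities} and create no circularity), rewrite them via Theorem \ref{Brenti, Welker} as linear inequalities valid over all $M$-sequences of length $d+1$ (Theorem \ref{ShellableCM}), and isolate the coefficient at a chosen index $r_0$ by the lopsided family $\bigl(1,n,\ldots,\binom{n+r_0-1}{r_0},0,\ldots,0\bigr)$ and a limit $n\to\infty$; this correctly handles the non-obvious point that the cone of $M$-sequences is smaller than the positive orthant. Your route avoids the $I/(I\cap\Theta)$ argument and Proposition \ref{McMullen, Walkup} entirely (and handles the column $r=1$ of part (i) more transparently than the restriction-face mechanism does), at the cost of an asymptotic argument and of invoking the realizability direction of Theorem \ref{ShellableCM}; the paper's argument is tighter in that it exhibits an actual graded module whose Hilbert function is a single column, which is structurally a bit stronger than the inequalities alone. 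One small bookkeeping point: to get the decreasing chain of part (ii) at $r=d$ from Lemma \ref{sym} (which sends column $r+1$ to column $d+1-r$), you need the increasing chain at column $1$, i.e.\ the case $r_0=0$, which you excluded when you restricted to $r_0\in\{1,\ldots,d\}$; your limiting argument (indeed the degenerate case $h=(1,0,\ldots,0)$, the simplex) provides it verbatim, so this is a one-line fix rather than a gap.
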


\begin{proof}
Let $\Delta$ be a shellable $(d-1)$-dimensional simplicial complex. Let $F_1,\ldots, F_m$ be a shelling of $\Delta$ with $m\geq 2$ and set $\widetilde{\Delta}:=\langle F_1,\ldots,F_{m-1}\rangle$. Since $\sd(\widetilde{\Delta})$ is a subcomplex of $\sd(\Delta)$ we get the following short exact sequence of $A$-modules for $A=k[x_1,\ldots,x_{f_0^{\sd(\Delta)}}]$:
$$0\rightarrow I\rightarrow k[\sd(\Delta)]\rightarrow k[\sd(\widetilde{\Delta})]\rightarrow 0,$$
where $I$ denotes the kernel of the projection on the right-hand side. Let $\Theta$ be a maximal l.s.o.p. for both $k[\sd(\Delta)]$ and $k[\sd(\widetilde{\Delta})]$. As $\widetilde{\Delta}$ is shellable it is CM and therefore $\sd(\widetilde{\Delta})$ is CM as well. Hence dividing out by $\Theta$ yields the following exact sequence of $A$-modules:
\begin{equation} \label{exactSequence}
0\rightarrow I/(I\cap\Theta)\rightarrow k[\sd(\Delta)]/\Theta\rightarrow k[\sd(\widetilde{\Delta})]/\Theta\rightarrow 0.
\end{equation}
Consider the following commutative diagram
$$\begin{array}{ccccccccc}\label{commDiagram}
   0 & \to & I/(I \cap \Theta)_i &\to& (k[\sd(\Delta)]/\Theta)_i& &\\
    & & & & & & &\\
   & & \downarrow \omega^{d-1-2i}& &\ \ \downarrow \omega^{d-1-2i}& &\\
     & & & & & & &\\
      0 & \to & I/(I \cap \Theta)_{d-1-i} &\to& (k[\sd(\Delta)]/\Theta)_{d-1-i} & & \\
   \end{array}$$
where $\omega$ is in $A_1$.
By Theorem \ref{mainresult} multiplication
$$\omega^{d-2i-1}:\hspace{5pt}(k[\sd(\Delta)]/\Theta)_i\rightarrow (k[\sd(\Delta)])_{d-1-i}$$
is an injection for $0\leq i\leq \lfloor\frac{d-2}{2}\rfloor$ and generic $\omega$. It hence follows that also multiplication
\begin{equation}\label{multip}
\omega^{d-1-2i}: \hspace{5pt}(I/(I\cap\Theta))_i\rightarrow (I/(I\cap\Theta))_{d-1-i}
\end{equation}
is an injection for $0\leq i\leq \lfloor\frac{d-2}{2}\rfloor$.

Furthermore, we deduce from the sequence (\ref{exactSequence}) that $\dim_k(I/(I\cap\Theta))_t=h_t^{\sd(\Delta)}-h_t^{\sd(\widetilde{\Delta})}$ for $0\leq t\leq d$.

In order to compute this difference we determine the change in the $h$-vector of $\widetilde{\Delta}$ when adding the last facet $F_m$ of the shelling. Let $r_m:=\#\res(F_m)$. Proposition \ref{McMullen, Walkup} implies $h_{r_m}^{\Delta}=h_{r_m}^{\widetilde{\Delta}}+1$ and $h_i^{\Delta}=h_i^{\widetilde{\Delta}}$ for $i\neq r_m$. Using Theorem \ref{Brenti, Welker} we deduce:
\begin{eqnarray*}
h_{i}^{\sd(\Delta)}&=&\sum_{r=0}^d A(d+1,i,r+1)h_r^{\Delta}\\
&=&\sum_{r=0}^d A(d+1,i,r+1)h_r^{\widetilde{\Delta}}+A(d+1,i,r_m+1)\\
&=&h_i^{\sd(\widetilde{\Delta})}+A(d+1,i,r_m+1).
\end{eqnarray*}
Thus $\dim_k(I/(I\cap\Theta))_i=A(d+1,i,r_m+1)$ for $0\leq i\leq \lfloor \frac{d-2}{2}\rfloor$. {From} (\ref{multip}) it follows that
$A(d+1,i,r_m+1)\leq A(d+1,d-1-i,r_m+1)$.

Take $\Delta$ to be the boundary of the $d$-simplex. Since in this case $h_i^{\Delta}\geq 1$ for $0\leq i\leq d$, i.e. restriction faces of all possible sizes occur in a shelling of $\Delta$, it follows that
$A(d+1,i,r)\leq A(d+1,d-1-i,r)$ for every $1\leq r\leq d+1$ and $0\leq i\leq \lfloor\frac{d-2}{2}\rfloor$. This shows (i).

To show (ii) we use that the injections in (\ref{multip}) induce injections
$$\omega: \hspace{5pt}(I/(I\cap\Theta))_i\rightarrow (I/(I\cap\Theta))_{i+1}$$
for $0\leq i\leq \lfloor\frac{d-2}{2}\rfloor$.
Thus, $A(d+1,i,r_m+1)\leq A(d+1,i+1,r_m+1)$. The same reasonning as in (i) shows that $A(d+1,i,r)\leq A(d+1,i+1,r+1)$ for $0\leq i\leq \lfloor\frac{d-2}{2}\rfloor$ and $1\leq r\leq d$.
The second part of (ii) follows from the first one using Lemma \ref{sym}.
\end{proof}

\begin{example}
$A(6,2,3)=60>48=A(6,3,3)$ while $A(6,2,4)=48<60=A(6,3,4)$.
This shows that for $d$ odd $A(d+1,\lfloor \frac{d}{2}\rfloor,r+1)$ may be larger or smaller then $A(d+1,\lceil \frac{d}{2}\rceil,r+1)$.
\end{example}

Recall that a sequence of integers $s=(s_0,\ldots,s_d)$ is called unimodal if there is a $0\leq j\leq d$ such that $s_0\leq\ldots \leq s_j\geq \ldots\geq s_d$. We call $s_j$ a \emph{peak} of this sequence and say that it is at \emph{position $j$} (note that $j$ may not be unique).

\begin{remark}
{From} \cite{Brenti-Welker} it can already be deduced that the sequence $(A(d+1,0,r+1),\ldots,A(d+1,d,r+1))$ is unimodal. Applying the linear transformation of Theorem \ref{Brenti, Welker} to the $(r+1)$st unit vector yields the sequence $(A(d+1,0,r+1),\ldots,A(d+1,d,r+1))$. It then follows from \cite[Theorem 3.1, Remark 3.3]{Brenti-Welker} that the generating polynomial of this sequence is real-rooted.
Since $A(d,i,r+1)\geq 1$ for $i\geq 1$ the sequence $(A(d+1,0,r+1),\ldots,A(d+1,d,r))$ has no internal zeros. Together with the real-rootedness this implies that $(A(d+1,0,r+1),\ldots,A(d+1,d,r))$ is unimodal. However, this argument tells nothing about the position of the peak.
\end{remark}

Recall that a regular CW-complex $\Delta$ is called a \emph{Boolean cell complex} if for each $A\in\Delta$ the lower interval $[\emptyset,A]:=\{~B\in\Delta~|~\emptyset\leq_{\Delta}B\leq_{\Delta}A\}$ is a Boolean lattice, where $A\leq_{\Delta}A'$ if $A$ is contained in the closure of $A'$ for $A,A'\in\Delta$.
In \cite{Brenti-Welker} it was shown that the $h$-vector of the barycentric subdivision of a Boolean cell complex with
non-negative entries is unimodal. What remains open is the location of its peak. Using Corollary \ref{inequalities} we can solve this problem:

\begin{corollary}\label{cor:Peak}
Let $\Delta$ be a $(d-1)$-dimensional Boolean cell complex with $h_i^{\Delta}\geq 0$ for $0\leq i\leq d$. Then the peak of $h^{\sd(\Delta)}$ is at position $\frac{d}{2}$ if $d$ is even and at position $\frac{d-1}{2}$ or $\frac{d+1}{2}$ if $d$ is odd. In particular, this assertion holds for CM complexes.
\end{corollary}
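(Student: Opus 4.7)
The plan is to deduce Corollary \ref{cor:Peak} as an essentially immediate consequence of Theorem \ref{Brenti, Welker} and the monotonicity packaged in Corollary \ref{inequalities}(ii). By Theorem \ref{Brenti, Welker},
$$h_j^{\sd(\Delta)}=\sum_{r=0}^d A(d+1,j,r+1)\,h_r^{\Delta},$$
so the crux is that, for fixed $r$, the sequence $j\mapsto A(d+1,j,r+1)$ is nondecreasing up to $j=\lfloor d/2\rfloor$ and nonincreasing from $j=\lceil d/2\rceil$. This is precisely Corollary \ref{inequalities}(ii) for $r\geq 1$, and for $r=0$ it reduces (by fixing $\sigma(1)=1$ and forgetting the leading entry) to the classical unimodality of the Eulerian numbers $A(d,j)$ in $S_d$.

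I would then treat each $h_r^{\Delta}\geq 0$ as a nonnegative weight: multiplying the termwise inequalities for fixed $r$ by $h_r^{\Delta}$ and summing over $r$ produces
$$h_0^{\sd(\Delta)}\leq h_1^{\sd(\Delta)}\leq\ldots\leq h_{\lfloor d/2\rfloor}^{\sd(\Delta)}$$
and
$$h_d^{\sd(\Delta)}\leq h_{d-1}^{\sd(\Delta)}\leq\ldots\leq h_{\lceil d/2\rceil}^{\sd(\Delta)}.$$
If $d$ is even, both chains terminate at $d/2=\lfloor d/2\rfloor=\lceil d/2\rceil$, so the peak of $h^{\sd(\Delta)}$ is forced at position $d/2$. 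If $d$ is odd, the chains terminate at the consecutive indices $(d-1)/2$ and $(d+1)/2$; the peak must sit at one of the two, and the numerical example displayed right after Corollary \ref{inequalities} shows that neither is singled out in general.

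For the ``in particular'' clause, a Cohen--Macaulay complex has its $h$-vector an $M$-sequence (by the Boolean cell complex analogue of Theorem \ref{ShellableCM}), hence nonnegative, so the hypothesis is automatic. I do not foresee any genuine obstacle: all the hard combinatorics has been extracted into Corollary \ref{inequalities}(ii), and the only point worth flagging is that its stated range is $1\leq r\leq d$, so the $r=0$ summand has to be handled separately by the classical Eulerian unimodality.
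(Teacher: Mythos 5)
Your proposal is correct and follows essentially the same route as the paper: substitute the Brenti--Welker formula of Theorem \ref{Brenti, Welker} and sum the termwise monotonicity of Corollary \ref{inequalities}(ii) against the nonnegative weights $h_r^{\Delta}$, with the even/odd case split at the end. Your explicit handling of the $r=0$ summand (where $A(d+1,j,1)$ reduces to the classical Eulerian numbers, whose unimodality supplies the needed inequalities) is a careful touch on a point the paper's proof passes over silently, but it is a refinement rather than a different argument.
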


\begin{proof}
Since $h_i^{\Delta}\geq 0$ for $0\leq i\leq d$, by Theorem \ref{Brenti, Welker} and Corollary \ref{inequalities} (ii) we deduce
\begin{eqnarray*}
h_j^{\sd(\Delta)}&=&\sum_{r=0}^d A(d+1,j,r+1)h_r^{\Delta}\\
&\stackrel{Corollary\; \ref{inequalities} (ii)}{\leq}&\sum_{r=0}^d A(d+1,j+1,r+1)h_r^{\Delta}=h_{j+1}^{\sd(\Delta)}
\end{eqnarray*}
for $0\leq j\leq \lfloor\frac{d-2}{2}\rfloor$. Thus $h_0^{\sd(\Delta)}\leq h_1^{\sd(\Delta)}\leq\ldots\leq h_{\lfloor\frac{d}{2}\rfloor}^{\sd(\Delta)}$.

Similarly one shows $h_{\lceil\frac{d}{2}\rceil}^{\sd(\Delta)}\geq h_{\lceil\frac{d}{2}\rceil+1}^{\sd(\Delta)}\geq\ldots\geq h_d^{\sd(\Delta)}$, when applying Corollary \ref{inequalities} (ii) for $j\geq \lceil \frac{d}{2}\rceil$.

If $d$ is even $\lfloor\frac{d}{2}\rfloor=\lceil\frac{d}{2}\rceil=\frac{d}{2}$ and the peak of $h^{\sd(\Delta)}$ is at position $\frac{d}{2}$.
\end{proof}

\begin{example}
If $d$ is odd, depending on whether $h_{\lfloor\frac{d}{2}\rfloor}^{\sd(\Delta)}\leq h_{\lceil\frac{d}{2}\rceil}^{\sd(\Delta)}$ or vice versa the peak of $h^{\sd(\Delta)}$ is at position $\frac{d-1}{2}$ or $\frac{d+1}{2}$. For example,
for $d=3$ let $\Delta$ be the $2$-skeleton of the $4$-simplex. Then $h^{\Delta}=(1,2,3,4)$ and $h^{\sd(\Delta)}=(1,22,33,4)$, i.e. the peak is at position $\frac{3+1}{2}=2$.\\
If $\Delta$ consists of $2$ triangles intersecting along one edge, i.e. $\Delta:=\langle \{1,2,3\},\{2,3,4\} \rangle$, then
$h^{\Delta}=(1,1,0,0)$ and $h^{\sd(\Delta)}=(1,8,3,0)$. In this case the $h$-vector peaks at position $\frac{3-1}{2}=1$.
\end{example}

Using Corollary \ref{inequalities} we establish also the following inequalities; compactly summarized later in Corollary \ref{cor:vectorA(d)}.

\begin{corollary} \label{inequalities2}
\begin{itemize}
\item[(i)] $A(d+1,j,1)\leq A(d+1,j,2)\leq \ldots\leq A(d+1,j,d+1)$ for $\lceil\frac{d+1}{2}\rceil=\lfloor\frac{d+2}{2}\rfloor\leq j\leq d$.
\item[(ii)] $A(d+1,j,1)\geq A(d+1,j,2)\geq\ldots\geq A(d+1,j,d+1)$ for $0\leq j\leq \lfloor\frac{d-1}{2}\rfloor$.
\item[(iii)] $A(d+1,\frac{d}{2},1)\leq A(d+1,\frac{d}{2},2)\leq \ldots\leq A(d+1,\frac{d}{2},\frac{d}{2}+1)\geq A(d+1,\frac{d}{2},\frac{d}{2}+2)\geq \ldots\geq A(d+1,\frac{d}{2},d+1)$ if $d$ is even.
\item[(iv)] $A(d+1,j,1)=A(d+1,j+1,d+1)$ for $0\leq j\leq d-1$.
\end{itemize}
\end{corollary}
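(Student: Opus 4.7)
The plan is to deduce all four parts from Corollary \ref{inequalities}, Lemma \ref{sym}, and one recursive identity. Part (iv) is purely combinatorial: a permutation $\sigma \in S_{d+1}$ with $\sigma(1) = 1$ has no descent at position $1$, while one with $\sigma(1) = d+1$ always has a descent there, so in each case deleting $\sigma(1)$ and relabeling the remaining entries yields a bijection onto $\{\pi \in S_d : \des(\pi) = j\}$; hence $A(d+1, j, 1) = A(d+1, j+1, d+1)$.

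The engine for (i)--(iii) is the identity
\[
A(d+1, j, r) - A(d+1, j, r+1) = A(d, j, r) - A(d, j-1, r) \qquad (1 \le r \le d),
\]
which I would prove via the order-preserving bijection between $\{\sigma \in S_{d+1} : \sigma(1) = r\}$ and $S_d$ obtained by deleting $\sigma(1)$ and subtracting $1$ from entries exceeding $r$. A descent at position $1$ of $\sigma$ corresponds exactly to $\pi(1) < r$ for the image $\pi$, so
\[
A(d+1, j, r) = \sum_{v=1}^{r-1} A(d, j-1, v) + \sum_{v=r}^{d} A(d, j, v),
\]
and the identity follows by subtracting the analogous expression for $A(d+1, j, r+1)$.

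For (ii), the desired inequality $A(d+1, j, r) \ge A(d+1, j, r+1)$ becomes $A(d, j, r) \ge A(d, j-1, r)$. For $r \in [2, d]$ this is the first chain of Corollary \ref{inequalities}(ii) applied with its dimension parameter decreased by one; the case $j = 0$ is trivial since $A(d, -1, r) = 0$. The boundary case $r = 1$ is handled via Lemma \ref{sym}, which rewrites the inequality as $A(d, d-1-j, d) \ge A(d, d-j, d)$; this is the second chain of Corollary \ref{inequalities}(ii) at its right endpoint $s = d$, with the extreme case $A(d, d, d) = 0$ handled trivially. Part (i) then follows from (ii) by the involution $(j, r) \mapsto (d-j, d+2-r)$ of Lemma \ref{sym}, which sends the decreasing chain for $j \le \lfloor (d-1)/2 \rfloor$ to the increasing chain for $j \ge \lceil (d+1)/2 \rceil$.

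For (iii), with $d$ even and $j = d/2$, combining the identity with Lemma \ref{sym} applied inside $A(d, \cdot, \cdot)$ gives
\[
A(d+1, d/2, r) - A(d+1, d/2, r+1) = A(d, d/2-1, d+1-r) - A(d, d/2-1, r).
\]
Since $d/2 - 1 = \lfloor (d-2)/2 \rfloor$, part (ii) at one dimension lower shows that $s \mapsto A(d, d/2-1, s)$ is non-increasing on $[1, d]$; hence the right-hand side is $\le 0$ for $r \le d/2$ and $\ge 0$ for $r \ge d/2 + 1$, giving unimodality with peak at $r = d/2 + 1$. The main conceptual step is the identity above, which bridges the two \emph{orthogonal} directions of inequality: Corollary \ref{inequalities} controls the descent index, while this corollary concerns the first-entry index. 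Once the identity is in place, the rest is careful index bookkeeping plus one application of Lemma \ref{sym} to handle the endpoint $r = 1$.
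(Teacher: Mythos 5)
Your proof is correct and follows essentially the same route as the paper: both rest on the identity $A(d+1,j,r)-A(d+1,j,r+1)=A(d,j,r)-A(d,j-1,r)$ (which you get from the delete-the-first-entry recursion, while the paper gets it from the transposition bijection $\sigma\mapsto(r,r+1)\sigma$ together with counting the cases $\sigma(2)=r\pm1$), followed by Corollary \ref{inequalities}(ii) one dimension lower and Lemma \ref{sym}, with (i) and (ii) interchanged only in which is proved first. Your proof of (iv) via deleting $\sigma(1)$ so that both sides equal the Eulerian number is a minor variant of the paper's symmetry-plus-reversal argument, and your explicit treatment of the boundary case $r=1$ via Lemma \ref{sym} is a welcome extra bit of care.
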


\begin{proof}
To prove (i) we need to show that $A(d+1,j,r)\leq A(d+1,j,r+1)$ for $1\leq r\leq d$ and $\lfloor\frac{d+2}{2}\rfloor\leq j\leq d$. For $j=d$ this follows from $\{~\sigma\in S_{d+1}~|~\des(\sigma)=d\}=\{~(d+1)d\ldots 21~\}$. Let $C_{j,r}^d:=\{~\sigma\in S_{d+1}~|~\des(\sigma)=j,\sigma(1)=r\}$. Consider the following map:

$$
\begin{array}{ccc}
\phi_{j,r}^d:\{\sigma\in C_{j,r}^d~|~\sigma(2)\neq r+1\}&\rightarrow& \{~\sigma\in C_{j,r+1}^d~|~\sigma(2)\neq r\}\\
\sigma&\mapsto& (r,r+1)\sigma.
\end{array}
$$
For $\sigma\in C_{j,r}^d$, if $\des(\sigma)=j$ and $\sigma(2)\neq r+1$ then $\sigma$ and $(r,r+1)\sigma$ have the same descent set, hence $\des((r,r+1)\sigma)=j$ as well.

As $((r,r+1)\sigma)(1)=r+1$ the function $\phi_{j,r}^d$ is well-defined. Since $(r,r+1)^2=id$ it follows that $\phi_{j,r}^d$ is invertible and therefore $\#\{~\sigma\in C_{j,r}^d~|~\sigma(2)\neq r+1\}=\#\{~\sigma\in C_{j,r+1}^d~|~\sigma(2)\neq r\}$.\\
If $\sigma\in C_{j,r}^d$ and $\sigma(2)=r+1$, then all of the $j$ descents must occur at position at least $2$.

The sequence $\tilde{\sigma}=(r+1)\sigma(3)\ldots\sigma(d+1)$ can be identified with a permutation $\tau$ in $S_d$ with $\tau(1)=r$ and vice versa via the order preserving map $[d+1]\setminus \{r\}\rightarrow [d]$, hence the descent set is preserved under this identification.
Therefore
 $\#\{~\sigma\in C_{j,r}^d~|~\sigma(2)= r+1\}=\#\{~\sigma\in C_{j,r}^{d-1}\}=A(d,j,r)$. On the other hand, if $\sigma\in C_{j,r+1}^d$ and $\sigma(2)=r$ then $\sigma$ has exactly $j-1$ descents at positions $\{2,\ldots,d\}$. A similar argumentation as before then implies  $\#\{~\sigma\in C_{j,r+1}^d~|~\sigma(2)= r\}=\#\{~\sigma\in C_{j-1,r}^{d-1}\}=A(d,j-1,r)$.

By Corollary \ref{inequalities}(ii) it holds that $A(d,j,r)\leq A(d,j-1,r)$ for $d-2\geq j-1\geq \lceil\frac{d-1}{2}\rceil$, i.e. $d-1\geq j\geq \lceil\frac{d+1}{2}\rceil=\lfloor\frac{d+2}{2}\rfloor$.
Combining the above, we obtain $A(d+1,j,r)\leq A(d+1,j,r+1)$ for $1\leq r\leq d$ and $\lfloor\frac{d+2}{2}\rfloor\leq j\leq d-1$, and (i) follows.

(ii) follows directly from (i) and Lemma \ref{sym}.

For the proof of (iii) we only show $A(d+1,\frac{d}{2},1)\leq A(d+1,\frac{d}{2},2)\leq \ldots\leq A(d+1,\frac{d}{2},\frac{d}{2}+1)$. The other inequalities in (iii) follow directly from this part by Lemma \ref{sym}.
The proof of (i) shows that $\#\{~\sigma\in C_{\frac{d}{2},r}^d~|~\sigma(2)\neq r+1\}=\#\{~\sigma\in C_{\frac{d}{2},r+1}^d~|~\sigma(2)\neq r\}$.
As in the proof of (i), it remains to prove that $A(d,\frac{d}{2},r)\leq A(d,\frac{d}{2}-1,r)$ for $1\leq r\leq \frac{d}{2}$. By Lemma \ref{sym} it holds that $A(d,\frac{d}{2},r)=A(d,\frac{d}{2}-1,d+1-r)$. For $1\leq r\leq \frac{d}{2}$ we have $r\leq d+1-r$ and (ii) then implies $A(d,\frac{d}{2}-1,r)\geq A(d,\frac{d}{2}-1,d+1-r)$ which finishes the proof of (iii).

To show (iv) note that by Lemma \ref{sym} $A(d+1,j,1)=A(d+1,d-j,d+1)$. If $\sigma=(d+1)\sigma(2)\ldots\sigma(d+1)\in C_{d-j,d+1}^d$, then the 'reverse' permutation $\widetilde{\sigma}:=(d+1)\sigma(d+1)\ldots\sigma(2)$ has a descent at position $1$ and whenever there is an ascent in $\sigma(2)\ldots\sigma(d+1)$. Since $\sigma$ has $d-j-1$ descents at positions $\{2,\ldots,d\}$ this implies $\des(\widetilde{\sigma})=1+(d-1)-(d-j-1)=j+1$, i.e. $\widetilde{\sigma}\in C_{j+1,d+1}^{d}$. We recover $\sigma$ by repeating this construction and hence $A(d+1,j,1)=A(d+1,j+1,d+1)$.
\end{proof}

Let $\mathcal{A}:=(A(d,i,j))_{i,j}$ be the matrix with entries $A(d,i,j)$ for fixed $d$. For pairs $(i,j), (i',j')$ we set $(i,j)<(i',j')$ if either $i< i'$ or $i=i'$ and $j>j'$. This defines a total order on the set of pairs $(i,j)$.
Using this ordering for the indices of the entries of the matrix we can write the matrix $\mathcal{A}$ as a vector $A(d)$.

{From} Corollary \ref{inequalities2} and Lemma \ref{sym} we immediately get the following.

\begin{corollary}\label{cor:vectorA(d)}
The sequence $A(d)$ is unimodal and symmetric for $d\geq 1$. In particular, the peak of $A(d)$ lies in the middle. $\square$
\end{corollary}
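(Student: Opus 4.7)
The statement is essentially a bookkeeping consequence of Lemma~\ref{sym} together with Corollary~\ref{inequalities2}; no new algebraic input is needed. The plan is to verify symmetry first, then establish monotonicity of the first half of $A(d)$ and invoke symmetry for the rest.

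For symmetry, I would introduce the position function $p(i,j) := id + (d-j)$ giving the $0$-indexed position of the entry $A(d,i,j)$ in $A(d)$. A direct calculation shows $p(i,j) + p(d-1-i,\, d+1-j) = d^2 - 1$, so the pair of indices related by Lemma~\ref{sym} sit at symmetric positions in a vector of length $d^2$. This immediately gives that $A(d)$ is symmetric.

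For unimodality I would show that the first half of $A(d)$ is non-decreasing; the second half then decreases by symmetry. Within row $i$ the entries appear in the order $A(d,i,d), A(d,i,d-1), \ldots, A(d,i,1)$. By Corollary~\ref{inequalities2}(ii) (applied with the index shift $d+1 \mapsto d$), for $0 \le i \le \lfloor (d-2)/2 \rfloor$ this reversed sequence is non-decreasing, so each such row is individually non-decreasing in the vector order. To glue rows together, Corollary~\ref{inequalities2}(iv) gives $A(d,i,1) = A(d,i+1,d)$; the last entry of row $i$ equals the first entry of row $i+1$, so monotonicity is preserved across row boundaries throughout the first half.

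The only delicate point is the middle row when $d$ is odd, namely $i = (d-1)/2$, which lies outside the range covered by (ii). For this row I would apply Corollary~\ref{inequalities2}(iii): read in order of decreasing $j$, the row first ascends to the entry at $j = (d+1)/2$ and then descends, and this peak sits at position $(d-1)d/2 + (d-1)/2 = (d^2-1)/2$ in $A(d)$, i.e.\ exactly at the center. When $d$ is even, rows $0, \ldots, d/2 - 1$ contribute a non-decreasing prefix of length $d^2/2$, ending with $A(d, d/2-1, 1) = A(d, d/2, d)$ by (iv), which matches the first entry of the descending second half; this produces a two-entry plateau at the center. In either parity the combination gives unimodality with peak in the middle, and the corollary follows. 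The main (minor) obstacle is keeping careful track of index shifts between $A(d+1,\cdot,\cdot)$ in Corollary~\ref{inequalities2} and $A(d,\cdot,\cdot)$ here, and of the relation between the ordering $(i,j) < (i',j')$ (which reverses $j$ within a row) and the monotonicity statements of Corollary~\ref{inequalities2}.
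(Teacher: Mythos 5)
Your proof is correct and follows exactly the route the paper intends: the paper derives this corollary directly from Lemma~\ref{sym} and Corollary~\ref{inequalities2}, and your write-up simply makes explicit the position bookkeeping, the row-by-row monotonicity from parts (ii) and (iii), and the gluing across rows via part (iv). The index shifts $d+1\mapsto d$ and the middle-row/parity analysis are handled correctly, so no gap remains (the case $d=1$ is trivial).
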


The numerical results in Corollaries \ref{cor:CM,M-seq} and \ref{cor:Peak}
suggest that the barycentric subdivision of a Cohen-Macaulay simplicial complex might be weak Lefschetz. Recall that a $(d-1)$-dimensional simplicial complex is called \emph{weak Lefschetz over $k$} if there exists a maximal l.s.o.p. $\Theta$ for $k[\Delta]$ and a degree one element $\omega\in (k[\Delta]/\Theta)_1$ such that the multiplication maps
$$\omega:\hspace{5pt}(k[\Delta]/\Theta)_i\;\longrightarrow\;(k[\Delta]/\Theta)_{i+1}$$
have full rank for every $i$. In particular, in our case this means injections for $0\leq i< \frac{d}{2}$ and surjections for $\lceil\frac{d}{2}\rceil\leq i \leq\dim\Delta$.


\begin{problem}
Let $\Delta$ be a $(d-1)$-dimensional Cohen-Macaulay simplicial complex and $k$ be an infinite field. Then the barycentric subdivision of $\Delta$ is weak Lefschetz over $k$.
\end{problem}

\section*{acknowledgment}
We are grateful to Volkmar Welker for his helpful suggestions concerning earlier versions of this paper.
Further thanks go to Mike Stillman for pointing out a mistake in an earlier version.

\bibliography{biblio}
\bibliographystyle{plain}
\end{document}